\title[Definable separability and second-countability]{Definable separability and second-countability in o-minimal structures}
\author[Pablo And\'ujar Guerrero]{Pablo And\'ujar Guerrero} 
\address{University of Leeds}
\email{pa377@cantab.net}
\date{}
\newtheorem{theorem}{Theorem}[section]
\newtheorem{definition}[theorem]{Definition}
\newtheorem{lemma}[theorem]{Lemma}
\newtheorem{fact}[theorem]{Fact}
\newtheorem{proposition}[theorem]{Proposition}
\newtheorem{remark}[theorem]{Remark}
\newtheorem{claim}[theorem]{Claim}
\newtheorem{example}[theorem]{Example}
\newtheorem{conjecture}[theorem]{Conjecture}
\newcommand{\Aa}{\mathcal{A}}
\newcommand{\Mm}{\mathcal{M}}
\newcommand{\Bb}{\mathcal{B}}
\newcommand{\Cc}{\mathcal{C}}
\newcommand{\Hh}{\mathcal{H}}
\newcommand{\Ainf}{A^{\text{inf}}}
\newcommand{\Afin}{A^{\text{fin}}}
\newcommand{\tausorg}{\tau_{\text{sorg}}}
\newcommand{\nice}{$\text{DSC}_\tau$}
\newenvironment{claimproof}[1][\proofname]
               {
                 \proof[#1]
                 
               }
               {
                 \endproof
               }
\begin{document}

\begin{abstract}
We show that separability and second-countability are first-order properties of topological spaces definable in o-minimal expansions of \mbox{$(\mathbb{R},<)$}. We do so by introducing first-order characterizations --definable separability and definable second-countability-- which make sense in a wider model-theoretic context. We prove that, within o-minimality, these notions have the desired properties, including their equivalence among definable metric spaces, and conjecture a definable version of Urysohn's Metrization Theorem.  
\end{abstract}

\maketitle

\noindent
{\small \emph{Mathematics Subject Classification 2020.} 03C64 (Primary); 54A05, 54D65, 54D70 (Secondary). \\
\emph{Key words.} tame topology, o-minimality, separability, second-countability, definable topological spaces.} 

\section{Introduction}

In this paper we deepen the area of tame topology by introducing first-order notions of separability and second-countability. 

O-minimality was birthed as a tame topological setting in the context of the o-minimal Euclidean topology~\cite{pillay86}. Since then o-minimal topology has been developed extensively beyond this seminal context. The first example might be Pillay's proof that definable groups have a definable manifold group topology~\cite{pillay88}, with the subsequent investigation by van den Dries of definable manifold spaces~\cite{dries98}, which in recent years has served as a framework for the development of the theory of definable analytic spaces~\cite{gaga23}. On the other hand, the study of topological spaces of definable functions, inevitably linked to the development of o-minimal analysis~\cite{pet-star-01, asch-fisher-11}, has long been another central topic in o-minimality.
The influential research into parametrizations in~\cite{pil-wilkie-06}, which led to the celebrated Pila-Wilkie theorem, motivated the abstraction of the notion of definable normed space~\cite{thomas12}. This was later generalized to the notion of definable metric space~\cite{walsberg-thesis}, and subsequently by the author to the study of o-minimal (explicitly) definable topological spaces~\cite{andujar_thesis}, providing thus a usable framework for the development of o-minimal point-set topology and functional analysis. 

In this paper we introduce and investigate definable analogues of separability and second-countability, in the context of definable topological spaces in o-minimal structures. We observe that these provide non-trivial dividing lines in our context. Specifically, every definable set with the o-minimal Euclidean topology is definably separable and definably second-countable, the definable Sorgenfrey line (Example~\ref{ex:sorg}) is definably separable but not definably second-countable, and any infinite definable set with the discrete topology lacks either property. Our main results (Theorems~\ref{thm:separability} and~\ref{thm:2c}) state that, in o-minimal expansions of $(\mathbb{R},<)$, definable separability and definable second-countability are equivalent to their classical counterparts, similarly to properties such as definable connectedness and definable compactness. We conjecture that, under minimal assumptions, definable second-countability characterizes o-minimal definable topological spaces which are, up to definable homeomorphism, Euclidean (Conjecture~\ref{conj:UMT}).

Our definition of definable separability makes sense in any structure (regardless of o-minimality) and our definition of definable second-countability invokes o-minimal dimension. Our analysis depends largely on basic o-minimality, including the fact that o-minimal dimension coincides with (naive) topological dimension, and on the Fiber Lemma described in Fact~\ref{fact:fiber}. 
Recently, there have been various axiomatic explorations of tame topology~\cite{sim-wal-19, dolich-good-22} in settings which generalize o-minimality, as well as non-strongly-minimal dp-minimal expansions of fields. This literature has yielded, in particular, the existence of a well-behaved notion of dimension in these settings. Hence, it seems plausible that the definitions proposed here extend fruitfully to more general frameworks.


The structure of the paper is as follows. 
In Section~\ref{sec:prelim} we present our notation and o-minimal machinery. Section~\ref{sec:prelim-top} includes our definitions of definable separability and second-countability, as well as some preliminary results showing that the properties behave as expected. In Section~\ref{sec:main} we prove our main results, namely the equivalence with the classical properties in o-minimal expansions of $(\mathbb{R},<)$. We also achieve a deeper characterization of the introduced properties, and derive that they are definable in families. In Section~\ref{sec:metric} we investigate definable separability and second-countability in the context of o-minimal definable metric spaces, proving their equivalence in this context. We also contrast our definitions with a different definition of definable separability for definable metric spaces, presented by Walsberg in~\cite{walsberg-thesis}. Finally in Section~\ref{sec:conj} we conjecture an o-minimal definable version of Urysohn's metrization theorem.   

In~\cite{atw} Thomas, Walsberg and the author proved various results on o-minimal definable topological spaces which can be seen to capture ``definable first-countability". In particular it is shown that every definable topological space in an o-minimal expansion of $(\mathbb{R},<)$ is first-countable. Consequently the present paper does not dwell on this property. 

\subsection*{Acknowledgments}
The author thanks Margaret E. M. Thomas for her supervision of research that led to much of the contents of this paper, which was undertaken during the author's doctoral degree, and Erik Walsberg, who informed the author of the proof of Proposition~\ref{remark_def_sep}. The author also thanks Pantelis Eleftheriou for comments on first drafts, and him, Mervyn Tong, and Ben De Smet, for helpful discussions on the topic, which finally led to the correct definition of definable second-countability. The author thanks the anonymous referee, who pointed out various mistakes and greatly improved the presentation of the paper.

\subsection*{Funding}
During the writing of this paper and the undertaking of the research into its contents the author was supported by the UK Engineering and Physical Sciences Research Council (EPSRC) Grant EP/V003291/1, and by the Graduate School and the Department of Mathematics at Purdue University. 

\section{Preliminaries} \label{sec:prelim}

\subsection{Conventions and terminology} \label{sec:notation}

Throughout we fix a first-order structure $\Mm=(M,\ldots)$. ``Definable" always means ``definable in $\Mm$, possibly with parameters". Every formula we consider is in the language of $\Mm$. We use $n$, $m$ and $k$ to denote positive natural numbers. We use $x$, $y$, $a$, $b$, $\ldots$ to denote tuples of variables or parameters in $M$. Given a tuple $x$, we denote its length by $|x|$.

Recall that a family of definable sets $\Aa=\{A_b : b\in B\}$ is (uniformly) definable if there exists a (partitioned) formula $\varphi(x,y)$ such that $B\subseteq M^{|y|}$ is a definable set and $A_b=\{ a \in M^{|x|} : \Mm\models \varphi(a,b)\}$ for every $b\in B$. 

For any given $n\geq 2$ we denote by $\pi:M^n \rightarrow M^{n-1}$ the projection to the first $n-1$ coordinates, where $n$ will often be clear from context. Given a set $X\subseteq M^n$ and a tuple $a \in M^m$, where $m<n$, let $X_a = \{b\in M^{n-m} : ( a,b ) \in X\}$.

We work mostly in the case where $\Mm$ is an o-minimal expansion of a dense linear order without endpoints $(M,<)$. In this case, given a definable set $X$, we denote its o-minimal dimension by $\dim X$. By the \emph{o-minimal Euclidean topology} we mean the order topology on $M$ and induced product topology on $M^n$. For background o-minimality we direct the reader to~\cite{dries98}. In particular, we will use o-minimal cell decomposition and uniform finiteness. 

Consider a finite collection of definable families of sets $\Bb_i = \{U^i_b : b \in B_i\}$, for $i\leq k$. We define their (definable) disjoint union as follows. For each $a=(a_1, \ldots, a_k) \in M^{k}$, let $i(a)$ denote the largest $i\leq k$ with $a_i \geq a_1$. For each $\mathbf{b}=(b_1,\ldots, b_k) \in \prod_{i=1}^k B_i$, let $V_{(a,\mathbf{b})}=U^{i(a)}_{b_{i(a)}}$. The disjoint union of the families $\Bb_i$ is the definable family $\{V_{(a,\mathbf{b})} : (a,\mathbf{b})\in M^k \times \prod_{i=1}^k B_i \}$. Note that this construction does not invoke new parameters.

\subsection{O-minimal preliminaries}

In this subsection we present our main tools from o-minimality. They are mostly connected to the Fiber Lemma stated in Fact~\ref{fact:fiber}.

In general we will not assume that our structure $\Mm$ has elimination of imaginaries. The purpose of the following technical lemma is to circumvent this fact. 

\begin{lemma}\label{lem:inf-family}
Let $\Mm$ be an o-minimal structure. Let $\Aa=\{ A_b : b\in B\}$ be an infinite definable family of sets. There exists another infinite definable family $\Aa'=\{ A'_t: t \in I\}$, where $I\subseteq M$ is an interval, with the following properties. 
\begin{enumerate}
    \item \label{itm:A-I_3} $\Aa' \subseteq \Aa$.
    \item \label{itm:A-I_2} $A'_t \neq A'_s$ for every distinct pair $t, s \in I$. 
\end{enumerate}
\end{lemma}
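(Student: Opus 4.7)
The plan is to use the lexicographic order on $M^{|y|}$ to definably pick a canonical representative from each equivalence class of the relation $b \sim b'$ iff $A_b = A_{b'}$, thereby sidestepping the lack of elimination of imaginaries in $\Mm$. O-minimal cell decomposition applied to the set of representatives will then yield an infinite cell, inside which one can extract a definable curve parametrized by an interval of $M$.

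In detail, write $n := |y|$, so $B \subseteq M^n$, and let $\preceq$ denote the lexicographic order on $M^n$ induced by $<$. Define
$$B^* \; := \; \{b \in B : \text{for every } b' \in B, \; A_{b'} = A_b \Rightarrow b \preceq b'\}.$$
This set is definable, and by construction the map $b \mapsto A_b$ restricts to a bijection between $B^*$ and the family $\Aa$ (viewed as a set of sets). Since $\Aa$ is infinite, so is $B^*$.

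By o-minimal cell decomposition, $B^*$ decomposes into finitely many cells; at least one of these cells, call it $C$, is infinite, so $d := \dim C \geq 1$. By the definition of a cell, some coordinate projection $p : M^n \to M^d$ restricts to a definable homeomorphism from $C$ onto an open cell $C' \subseteq M^d$. Fixing all but one coordinate at a suitable point of $C'$ and letting the remaining coordinate range over an open subinterval of the corresponding fiber produces an open interval $I \subseteq M$ and a definable injection $\iota : I \to C'$. Composing with $(p|_C)^{-1}$ yields a definable injection $\iota^* : I \to B^*$. Setting $A'_t := A_{\iota^*(t)}$ for $t \in I$ gives the required family: condition (\ref{itm:A-I_3}) is immediate from $\iota^*(I) \subseteq B$, and condition (\ref{itm:A-I_2}) follows because distinct elements of $B^*$ index distinct sets (by the very definition of $B^*$) and $\iota^*$ is injective.

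The main subtlety is the construction of $B^*$: one needs a definable system of representatives of $\sim$ without appealing to a quotient sort, and this is exactly what the lexicographic-minimum trick provides. Once $B^*$ is in hand, the extraction of an interval-parametrized subfamily is a routine application of cell decomposition.
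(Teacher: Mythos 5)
Your construction of $B^*$ does not work: in a dense linear order without endpoints, the lexicographic order on $M^n$ is itself a dense order without endpoints, and a definable equivalence class need not have a $\preceq$-minimum. So the set $B^* = \{b \in B : A_{b'} = A_b \Rightarrow b \preceq b'\}$ can fail to meet some (or all) classes. A concrete failure: take $B = M^2$ and $A_{(b_1,b_2)} = (b_1,\infty)$. The class of $(b_1,b_2)$ is $\{b_1\}\times M$, which has no lexicographic minimum since $M$ has no least element; hence $B^* = \emptyset$ and your argument collapses at the first step. Even in the unary case a class can be an open interval and so fail to contain its infimum. This is exactly the obstruction the paper's proof is built around: it works with $\inf E(b,M)$, which always exists in $M \cup \{-\infty\}$ but need not lie in the class, splits $B$ according to whether the infimum is attained, and uses o-minimality to check that on each piece the infimum still separates distinct classes (two disjoint definable subsets of $M$ with the same unattained infimum would each have to contain an interval $(r,\varepsilon)$, contradicting disjointness). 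For $B \subseteq M^n$ with $n>1$ the paper then runs an induction on $n$ using uniform finiteness on the fibers, rather than a single global choice of representatives.

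The remainder of your argument (cell decomposition of an infinite definable subset of $M^n$, restriction to a coordinate line inside an infinite cell to get an interval-parametrized injective subfamily) is fine and would close the proof \emph{if} you had a definable set of representatives meeting each class exactly once. To repair the proof you would need to replace the lexicographic minimum by some invariant of the class that always exists and is definable from the class, e.g.\ an iterated-infimum construction in the spirit of the paper, handling at each stage the case where the infimum is not attained.
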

\begin{proof}
Let $\Aa$ be as in the lemma. We begin by proving the case where $B \subseteq M$. 

Let $E \subseteq B \times B$ be the definable equivalence relation on $B$ given by $( b, c ) \in E$ whenever $A_b = A_c$. For each $b\in B$, let $E(b,M)$ denote the equivalence class of $b$ by $E$.
Let $B^{(0)}=\{ b \in B :  \inf E(b,M) \in E(b,M)\}$ and $B^{(1)}=B\setminus B^{(0)} = \{ b \in B : \inf E(b,M) \notin E(b,M) \}$. For each $i\in \{0, 1\}$, let $\Aa^{(i)}= \{ A_b : b \in B^{(i)}\}$. Clearly $\Aa = \Aa^{(0)}\cup \Aa^{(1)}$.  Since $\Aa$ is infinite, there must be some fixed $i\in \{0,1\}$ such that $\Aa^{(i)}$ is infinite. 

By o-minimality note that, for every $b, c \in B^{(i)}$, if $\inf E(b,M) = \inf E(c,M)$ then $A_b = A_c$. In particular the definable set $C=\{ \inf E(b,M) : b \in B^{(i)}\}\setminus \{-\infty\}$ is infinite. 
For every $c \in C$, let $A'_c$ be the set $A_b$ where $b \in B^{(i)}$ is any element satisfying that $c = \inf E(b,M)$. Observe that the family $\{ A'_b : b \in C\}$ is definable, infinite, and it satisfies that $A'_b \neq A'_c$ for every distinct pair $b, c \in C$. Finally, again by o-minimality there exists an interval $I\subseteq C$ such that $\{A'_c : c \in I\}$ is infinite. This completes the proof of the case $B\subseteq M$. 

Now let $B\subseteq M^n$. We complete the proof of the lemma by showing the existence of an infinite definable subfamily of $\Aa$ indexed definably by a subset of $M$. We proceed by induction on $n$. The base case $n=1$ is trivial. We assume that $n>1$.  

Consider the projection $\pi(B)$ of $B$ to the first $n-1$ coordinates. 
Suppose that there exists some $d \in \pi(B)$ such that the definable family $\Aa_d=\{ A_{( d,t )} : t \in B_d\}$ is infinite. Then it suffices to pass to this family. Hence onwards suppose that, for every $d \in \pi(B)$, the family $\Aa_d$ is finite. For each $d\in \pi(B)$ and $t \in B_d$ let $E_d(t,M)=\{ s \in B_d : A_{( d, t)} = A_{( d, s)}\}$, and set $C_d = \{ \inf E_d(t,M) : t \in B_d\} \setminus \{-\infty\}$.
Observe that the sets $C_d$ are definable uniformly in $d$. Moreover, by o-minimality, for every $d\in \pi(B)$ we get that $|C_d|\leq |\Aa_d|<\infty$ and $|\Aa_d| \leq 1 + 2 |C_d|$. By uniform finiteness we derive that there exists a fixed $m$ such that $|\Aa_d| \leq m$ for every $d \in \pi(B)$. 

By o-minimality for every $d\in \pi(B)$ the sets in $\Aa_d$ are linearly ordered by the relation $<_d$, given by $A_{( d,s )} <_d A_{( d,t )}$ whenever there is $t' \in B_d$ with $A_{( d,t' )} = A_{( d,t )}$ and $s' < t'$ for every $s' \in B_d$ with $A_{( d,s' )} = A_{( d,s )}$. For each $1 \leq i\leq m$ let $A'_d(i) \in \Aa_d$ denote either the $i$-th set in $\Aa_d$ with respect to the order $<_d$ or, in the case where $|\Aa_d|<i$, the maximum one. Observe that, for each $1 \leq i\leq m$, the family of sets $\Aa(i)=\{A'_d(i) : d \in \pi(B)\}$ is definable. By choice of $m$ we also have that $\cup_{i\leq m} \Aa(i) = \Aa$. Since $\Aa$ is infinite, it follows that $\Aa(i)$ is infinite for some fixed $i$. We apply the induction hypothesis to $\Aa(i)$ to complete the proof.   
\end{proof}

In Lemma~\ref{lem:inf-family} above observe that, whenever the sets in $\Aa$ are not unary, it might be that $\Aa'$ is not definable over the same parameters as $\Aa$.

We now present the Fiber Lemma for o-minimal dimension. 

\begin{fact}[\cite{dries98}, Chapter 4, Proposition 1.5 and Corollary 1.6]\label{fact:fiber}
Let $\Mm$ be an o-minimal structure. Let $X\subseteq M^{n}\times M^{m}$ be a definable set and, for any $d\in \{-\infty, 0, 1, \ldots, m\}$, let $X(d)=\{ a \in M^n : \dim X_a = d\}$. Then the sets $X(d)$ are definable and  
\[ 
\dim \left( \bigcup_{a\in X(d)} \{a\} \times X_a \right) = \dim(X(d)) + d.
\]
In particular $\dim X = \max_{0\leq d \leq m} \dim(X(d)) + d$. 
\end{fact}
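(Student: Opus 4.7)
The plan is to reduce to a well-known dimension identity for o-minimal cells via the Cell Decomposition Theorem.

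First I would apply cell decomposition to $X$, obtaining a finite partition $X = C_1 \sqcup \cdots \sqcup C_k$ into cells in $M^n \times M^m$. Write $p \colon M^n \times M^m \to M^n$ for the projection to the first $n$ coordinates. For each $i$, the image $p(C_i)$ is itself a cell, and by the inductive structure of cells all fibers $(C_i)_a$ for $a \in p(C_i)$ have the same (combinatorial) type, hence share a common o-minimal dimension $d_i$. The essential input is the standard cell identity $\dim C_i = \dim p(C_i) + d_i$.

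Definability of $X(d)$ is then immediate: for any $a \in M^n$ one has $X_a = \bigcup \{ (C_i)_a : a \in p(C_i) \}$, a finite union, so $\dim X_a = \max \{ d_i : a \in p(C_i) \}$ (with the convention $\max \emptyset = -\infty$). The condition $\dim X_a = d$ is therefore a Boolean combination of the definable conditions $a \in p(C_i)$, so $X(d)$ is definable. For the main formula, let $Y_d := \bigcup_{a \in X(d)} \{a\} \times X_a$. The upper bound $\dim Y_d \leq \dim X(d) + d$ comes from decomposing $Y_d$ as the finite union of the sets $C_i \cap p^{-1}(X(d))$: each such set has projection contained in $X(d)$ and constant fiber dimension at most $d$ (since $p(C_i) \cap X(d) \neq \emptyset$ forces $d_i \leq d$ by the max formula), so its dimension is at most $\dim X(d) + d$. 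For the lower bound, the same max formula says every $a \in X(d)$ lies in some $p(C_i)$ with $d_i = d$; hence $X(d)$ is the finite union of the sets $p(C_i) \cap X(d)$ over those $i$ with $d_i = d$, and picking one maximizing $\dim(p(C_i) \cap X(d))$ produces a cell-restriction inside $Y_d$ of dimension $\dim X(d) + d$. The ``in particular'' clause follows since $X$ is the finite disjoint union of the $Y_d$ and dimension distributes as a maximum over finite unions.

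The only non-routine ingredient is the cell-level identity $\dim C = \dim p(C) + d_C$ with constant fiber dimension $d_C$; this is proved in the standard o-minimal setup by induction on the ambient dimension directly from the Cell Decomposition Theorem, and is where the whole argument rests. Once it is in hand, the rest of the proof is careful bookkeeping across finitely many cells, using only that o-minimal dimension is monotone and additive-as-maximum on finite unions.
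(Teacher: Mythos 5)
This statement is quoted in the paper as a Fact imported from van den Dries's book (Chapter 4, Proposition 1.5 and Corollary 1.6), so the paper itself supplies no proof; your argument via cell decomposition and the cell-level identity $\dim C = \dim p(C) + d_C$ is essentially the standard proof given in that reference. The one place to tighten is that you apply the cell identity to the sets $C_i \cap p^{-1}(X(d))$, which are generally not cells: this is repaired either by first establishing definability of the sets $X(d)$ and then taking a cell decomposition of $X$ compatible with the sets $p^{-1}(X(d))$ (so that each cell's projection lies in a single $X(d)$), or by decomposing each $C_i \cap p^{-1}(X(d))$ again and using that dimension is the maximum over a finite union both for the fibers and for the projections. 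With that routine adjustment the upper and lower bounds, the definability claim, and the ``in particular'' clause all go through exactly as you describe.
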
 

We derive some easy consequences of the above fact in the next two lemmas. 

\begin{lemma}\label{lem:disjoint-union-dim}
Let $\Mm$ be an o-minimal structure. Let $\{A_b : b\in B\}$ be an infinite definable family of pairwise disjoint sets with $\dim A_b \geq n$ for every $b\in B$. Then 
$
\dim (\cup_{b\in B} A_b) > n.  
$
\end{lemma}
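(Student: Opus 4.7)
The plan is to form the ``graph'' of the family, $X = \{(b,a) : b \in B,\ a \in A_b\}$, bound its dimension from below via the Fiber Lemma (Fact~\ref{fact:fiber}), and then transfer this bound to $\bigcup_{b \in B} A_b$ using a definable bijection coming from pairwise disjointness.

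First I would note that, since $\dim A_b \geq n \geq 0$, every $A_b$ is nonempty, so pairwise disjointness together with infinitude of the family forces distinct indices to give distinct sets, and in particular $B$ itself to be infinite. By basic o-minimality (infinite definable subsets of $M^k$ have dimension at least one), this gives $\dim B \geq 1$.

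Next, apply Fact~\ref{fact:fiber} to $X$: the sets $X(d) = \{b \in B : \dim X_b = d\}$ are definable, and since $X_b = A_b$ has dimension $\geq n$ for every $b \in B$, the set $B$ is the union of those $X(d)$ with $d \geq n$. Since only finitely many $X(d)$ are nonempty, some $d_0 \geq n$ satisfies $\dim X(d_0) = \dim B \geq 1$, and then Fact~\ref{fact:fiber} yields
\[
\dim X \;\geq\; \dim X(d_0) + d_0 \;\geq\; 1 + n.
\]

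Finally, pairwise disjointness means that the projection $\pi : X \to M^{|x|}$, $(b,a) \mapsto a$, is injective: any $a \in \bigcup_{b \in B} A_b$ lies in a unique $A_b$, determining $b$. Hence $\pi$ is a definable bijection onto $\bigcup_{b \in B} A_b$, so $\dim \bigcup_{b \in B} A_b = \dim X \geq n + 1 > n$, as required. The whole argument is really just a careful bookkeeping exercise around the Fiber Lemma; the only point requiring a moment's thought is the realization that pairwise disjointness provides exactly the dimension-preserving bijection between the total space $X$ and the union, so I do not anticipate a serious obstacle. (Note that Lemma~\ref{lem:inf-family} is not strictly needed here, since the hypothesis $\dim A_b \geq 0$ already guarantees that distinct $b$'s yield distinct $A_b$'s.)
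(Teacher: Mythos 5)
Your argument is, at its core, the same as the paper's: form the graph $X=\bigcup_{b\in B}\{b\}\times A_b$, obtain $\dim X\geq n+1$ from Fact~\ref{fact:fiber}, and transfer this to $\bigcup_{b}A_b$ via the projection $(b,a)\mapsto a$. The Fiber Lemma step is fine. The gap lies in your opening paragraph and the closing parenthetical, where you dismiss Lemma~\ref{lem:inf-family}. In this paper ``an infinite definable family of pairwise disjoint sets'' means that the \emph{distinct members} of the family $\{A_b:b\in B\}$ are pairwise disjoint and that there are infinitely many of them; it does \emph{not} assert $A_b\cap A_c=\emptyset$ for all $b\neq c$, since the indexing may be redundant ($A_b=A_c$ for many pairs $b\neq c$). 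This weaker reading is forced by how the lemma is applied later: in Proposition~\ref{prop:2c-implies-sep} and Lemma~\ref{lem:sep-X(n)} it is invoked for families such as $\{C(b):b\in B_{\Aa}\}$ derived from a witness of non-definable-separability, where nothing prevents $C(b)=C(c)$ for $b\neq c$. Under redundant indexing your first step fails (nonemptiness plus disjointness of \emph{distinct sets} does not make $b\mapsto A_b$ injective), and with it the injectivity of $(b,a)\mapsto a$: if $A_b=A_c$ with $b\neq c$, then $(b,a)$ and $(c,a)$ have the same image. The equality $\dim\bigcup_b A_b=\dim X$ can then genuinely fail: take $B=M\times M$ and $A_{(s,t)}=A'_s$ for a non-redundantly indexed pairwise disjoint family $\{A'_s : s\in M\}$ of $n$-dimensional sets with $\dim\bigcup_s A'_s=n+1$; then $\dim X=n+2$. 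So your final line is not justified as written.

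The repair is exactly the paper's first sentence: apply Lemma~\ref{lem:inf-family} to replace $\{A_b:b\in B\}$ by an infinite definable subfamily indexed injectively by an interval $I\subseteq M$. For that subfamily distinct indices give distinct, hence disjoint, sets, and the rest of your argument goes through verbatim (with $\dim I=1$ in place of your $\dim B\geq 1$). In short, the missing ingredient is precisely the reduction you declared unnecessary.
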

\begin{proof}
By Lemma~\ref{lem:inf-family}, after passing to a subfamily if necessary, we may assume that $B$ is an interval and that $A_b \cap A_c =\emptyset$ for every distinct pair $b, c \in B$. In particular the set $X=\cup_{b\in B} \{b\} \times A_b$ is in definable bijection with $A=\cup_{b\in B} A_b$, by means of the projection $( b, a ) \mapsto a$. So $\dim A = \dim X$. By Fact~\ref{fact:fiber} note that $\dim X \geq 1 + n$, and the result follows.  
\end{proof}

\begin{lemma}\label{fact:fiber-max}
Let $\Mm$ be an o-minimal structure. Let $X\subseteq M^n \times M^m$ be a non-empty definable set. For every $a\in M^n$  and $c\in M^m$ let $X(a,M)=X_a=\{c' \in M^m : ( a,c' ) \in X\}$ and $X(M,c)=\{a' \in M^n : ( a',c ) \in X\}$. 
Let $A$ and $C$ denote the projections of $X$ to the first $n$ and last $m$ coordinates respectively.

Suppose that $\dim X(M,c) = \dim A$ for every $c\in C$. Then 
\[
\dim C = \max_{a \in A} \dim X(a,M). 
\]
\end{lemma}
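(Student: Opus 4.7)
The plan is to combine two applications of the Fiber Lemma (Fact~\ref{fact:fiber}), computing $\dim X$ in two different ways by projecting onto each factor.

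First, I would view $X$ via the projection to the last $m$ coordinates, whose image is $C$ and whose fibers are the sets $X(M,c)$. The hypothesis says every such fiber (for $c \in C$) has dimension $\dim A$. Thus, in the notation of Fact~\ref{fact:fiber} (with the roles of $M^n$ and $M^m$ swapped), the only nonempty stratum is at fiber-dimension $\dim A$, and that stratum equals $C$. This gives
\[
\dim X = \dim C + \dim A.
\]

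Next, I would apply Fact~\ref{fact:fiber} to $X$ as it stands, via the projection to the first $n$ coordinates. Set $d_0 = \max_{a \in A}\dim X(a,M)$ (well-defined since $A$ is finite-dimensional and nonempty). For every $d$, the stratum $X(d) = \{a \in M^n : \dim X_a = d\}$ is contained in $A$, hence $\dim X(d) \leq \dim A$; moreover $X(d) = \emptyset$ for $d > d_0$. Therefore
\[
\dim X = \max_{0 \leq d \leq d_0}\bigl(\dim X(d) + d\bigr) \leq \dim A + d_0.
\]

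Combining the two displays yields $\dim A + \dim C \leq \dim A + d_0$, so $\dim C \leq d_0$. The reverse inequality is immediate, since for each $a \in A$ the fiber $X(a,M)$ is a subset of $C$, whence $\dim X(a,M) \leq \dim C$ and therefore $d_0 \leq \dim C$.

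There is no significant obstacle; the argument is essentially two invocations of the Fiber Lemma plus the trivial observation $X(a,M) \subseteq C$. The only minor point is noting that the hypothesis ``$\dim X(M,c) = \dim A$ for every $c \in C$'' forces all fibers of the second projection to have a common dimension, which is precisely the situation where the Fiber Lemma collapses to a single stratum and gives the clean equality $\dim X = \dim C + \dim A$.
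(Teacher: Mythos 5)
Your proof is correct and follows essentially the same route as the paper: two applications of Fact~\ref{fact:fiber}, one to the permuted set (giving $\dim X = \dim A + \dim C$ from the constant fiber dimension hypothesis) and one to $X$ itself (giving $\dim X \leq \dim A + \max_{a\in A}\dim X(a,M)$), combined with the trivial bound $X(a,M)\subseteq C$. No gaps.
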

\begin{proof}
Let $f:M^n \times M^m \rightarrow M^m \times M^n$ be the permutation map given by $f(a,c)=(c,a)$. 
By Fact~\ref{fact:fiber}, applied to the set $X^{\text{opp}}=f(X)$, and the fact that $\dim X(M,c) = \dim A$ for every $c\in C$, we have that $\dim X = \dim A + \dim C$. On the other hand, by Fact~\ref{fact:fiber} applied to the set $X$, we have that 
\[
\dim X \leq \dim A + \max_{a \in A} \dim X(a,M) \leq \dim A + \dim C,
\]
and the lemma follows. 
\end{proof}

\section{Topological definitions and basic results} \label{sec:prelim-top}

In this section we introduce our definitions of definable separability and definable second-countability.
The former was inspired by a similar notion introduced by Walsberg in~\cite{walsberg-thesis} in the o-minimal definable metric setting. (We explain the connection between both definitions in Section~\ref{sec:metric}.) We also prove some preliminary results. Recall that a topological space is separable if it has a countable dense subset, and second-countable if the topology has a countable basis. 

Our setting will be that of definable (explicitly in the sense of Flum and Ziegler~\cite{FZ}) topological spaces. 

\begin{definition}\label{def:dts}
A \emph{definable topological space} $(X,\tau)$, $X\subseteq M^n$, is a topological space such that there exists a definable family of subsets of $X$ that is a basis for $\tau$. 
\end{definition}  

Let $\Mm$ be o-minimal. Any definable subset of $M^n$ is a definable topological space with o-minimal Euclidean topology. Further examples within o-minimality include the definable manifold spaces studied by Pillay~\cite{pillay88} and van den Dries~\cite[Chapter 10]{dries98}, the definable Euclidean quotient spaces of van den Dries~\cite[Chapter 10]{dries98} and Johnson~\cite{johnson14}, the definable normed spaces of Thomas~\cite{thomas12}, and the definable metric spaces of Walsberg~\cite{walsberg15}. Peterzil and Rosel~\cite{pet_rosel_18} studied one-dimensional definable topologies. See the author's doctoral dissertation~\cite{andujar_thesis} for an exhaustive exploration of o-minimal definable topological spaces.

Given a topological space $(X,\tau)$ and a subset $Y\subseteq X$ we denote its closure in the topology $\tau$ by $cl(X)$. 
We say that a definable topological space $(X,\tau)$ has \emph{small boundaries} if every definable set $Y\subseteq X$ satisfies that $\dim (cl(Y) \setminus Y) < \dim Y$. Any o-minimal Euclidean space has small boundaries~\cite[Chapter 4, Theorem 1.8]{dries98}. 

We now present our definition of definable separability. Observe that the notion makes sense in any structure, regardless of o-minimality. 

\begin{definition}[Definable separability]\label{dfn:separable}
A definable topological space $(X,\tau)$ is \emph{definably separable} if there exists no infinite definable family of open pairwise disjoint sets in $\tau$.
\end{definition}

The reader will note the similarity between Definition~\ref{dfn:separable} and the countable chain condition (\textbf{ccc}, or Suslin's condition) for topological spaces, i.e. the condition that a space does not contain an uncountable family of pairwise disjoint open sets. Every separable topological space has the \textbf{ccc}, but the converse is not true. Our main result Theorem~\ref{thm:separability} shows that both being separable and having the \textbf{ccc} are equivalent to being definably separable among topological spaces definable in o-minimal expansions of $(\mathbb{R},<)$, and so ``definable separability" could also be labelled the ``definable countable chain condition".

Clearly any finite definable topological space is definably separable. On the other hand any infinite definable set $X$ with the discrete topology is not definably separable, since $\{\{x\} : x\in X\}$ would be an infinite definable family of pairwise disjoint open sets. The fact that any definable set in an o-minimal structure with the o-minimal Euclidean topology is definably separable (Remark~\ref{remark:separable_euclidean_topology} below) will follow from showing that these spaces are definably second-countable and that this property implies definable separability. We leave it to the interested reader to check that this can also be derived using o-minimal cell decomposition and Lemma~\ref{lem:disjoint-union-dim}.

The following easy lemma will be used in Section~\ref{sec:main}.

\begin{lemma}\label{lem:sep-open-subspace}
Let $(X,\tau)$ be a definably separable definable topological space. Let $Y\subseteq X$ be a definable open subset. Then the subspace $(Y,\tau|_Y)$ is definably separable. 
\end{lemma}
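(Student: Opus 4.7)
The plan is to argue by contraposition: I will show that any infinite definable family of pairwise disjoint open sets of the subspace $(Y,\tau|_Y)$ is already an infinite definable family of pairwise disjoint open sets of $(X,\tau)$.

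Concretely, suppose toward contradiction that $(Y,\tau|_Y)$ is not definably separable. Then by Definition~\ref{dfn:separable} there is an infinite definable family $\{V_b : b\in B\}$ of pairwise disjoint sets that are open in the subspace topology $\tau|_Y$. For each $b \in B$ we can write $V_b = U_b \cap Y$ for some $U_b \in \tau$, but we do not need to fix such a presentation uniformly; it is enough to observe that since $Y$ is open in $(X,\tau)$ and a set open in $\tau|_Y$ equals the intersection of $Y$ with a set open in $\tau$, the set $V_b$ is itself open in $\tau$ (intersection of two $\tau$-open sets). Hence each $V_b$ lies in $\tau$.

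The family $\{V_b : b\in B\}$ remains definable (its defining formula is unchanged), remains indexed by the same infinite definable set $B$, and its members remain pairwise disjoint. Thus it contradicts the definable separability of $(X,\tau)$, completing the proof. I do not anticipate any obstacle here: the content of the lemma is simply that openness of $Y$ ensures that subspace-open definable families lift trivially to definable families of $\tau$-open sets.
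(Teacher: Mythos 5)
Your proof is correct and follows essentially the same route as the paper: both arguments observe that openness of $Y$ gives $\tau|_Y \subseteq \tau$, so a witnessing infinite definable pairwise disjoint family for $(Y,\tau|_Y)$ is already one for $(X,\tau)$. No issues.
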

\begin{proof}
Suppose that $(Y,\tau|_Y)$ is not definably separable, witnessed by an infinite definable family of pairwise disjoint open sets $\Aa$. Since $Y$ is open we have that $\tau|_Y \subseteq \tau$, and so $\Aa$ also witnesses that $(X,\tau)$ is not definably separable. 
\end{proof}

In Section~\ref{sec:metric} we discuss the fact that Lemma~\ref{lem:sep-open-subspace} does not hold in general if we drop the assumption that $Y$ is open. Namely, the condition of being definably separable is not hereditary.

We now move on to our notion of definable second-countability. 

\begin{definition}[Definable second-countability] \label{dfn:2nd-count}
Let $\Mm$ be an o-minimal structure and let $(X,\tau)$ be a definable topological space. We say that $(X,\tau)$ is \emph{definably second-countable} if there exists a definable basis $\{U_b: b\in B\}$ for the topology $\tau$ satisfying that, for $x\in X$ and $b\in B$, if $x \in U_b$ then  
\[
\dim \{ c \in B : x \in U_c \subseteq U_b\} = \dim B. 
\]
We write that $\{U_b: b\in B\}$ \emph{witnesses} that $(X,\tau)$ is definably second-countable. 
\end{definition}

It is easy to see that any infinite definable discrete space is not definably second-countable. We present a less obvious non-example. 

\begin{example}\label{ex:sorg}
Let $\Mm$ be o-minimal and consider the \emph{definable Sorgenfrey line} $(M,\tausorg)$, given by the set $M$ with the topology with basis of right half-open intervals $[x,y)$, for $x, y \in M$. When $(M,<)=(\mathbb{R},<)$ this is the classical Sorgenfrey line. 

Since every non-empty definable open set in $(M,\tausorg)$ is infinite it is easy to see, using Lemma~\ref{lem:disjoint-union-dim}, that $(M,\tausorg)$ is definably separable. On the other hand $(M,\tausorg)$ is not definably second-countable. To see this suppose, towards a contradiction, that there exists a definable basis $\Bb=\{U_b : b\in B\}$ witnessing that $(M,\tausorg)$ is definably second-countable. Then, for every $x \in M$, the definable set $B(x)=\{ b \in B : x \in U_b \subseteq [x,\infty)\}$ satisfies that $\dim B(x) = \dim B$. Moreover, the definable family $\{ B(x) : x\in M\}$ satisfies that $B(x) \cap B(y) = \emptyset$ for every distinct $x,y\in M$. A contradiction follows from Lemma~\ref{lem:disjoint-union-dim}. 

Following the same arguments as above, the Sorgenfrey line topology restricted to any interval is still definably separable and not definably second-countable.
\end{example}

The space $(M,\tausorg)$, as well as other spaces that fail to be definably second-countable, arise naturally when considering the pointwise convergence topology on definable families of functions. For example $(M,\tausorg)$ is the topology induced on $M$ by the collection of characteristic functions $\{ \mathbbm{1}_{(x,\infty)} : x \in M\}$ with the pointwise convergence topology, given that we identify each characteristic function $\mathbbm{1}_{(x,\infty)}$ with its index parameter $x$.

In order to work with Definition~\ref{dfn:2nd-count} we require a generalization of the introduced notion that applies to definable subsets of a definable topological space and which is maintained after passing to finite unions. Sadly, this is not the case with the property of being definably second-countable in the subspace topology. This generalization will involve the following definition.

\begin{definition}
Let $(X,\tau)$ be a topological space and $Y\subseteq X$. A \emph{$\tau$-basis for $Y$} is a family of open sets $\Bb$ satisfying that, for every $y\in Y$ and open set $A$ with $y\in A$, there exists some $U \in \Bb$ such that $y \in U \subseteq A$. In other words, $\Bb$ is a family of open sets which contains, for every $y\in Y$, a basis of open neighborhoods of $y$.
\end{definition}

It is easy to see that, if a topological space $(X,\tau)$ admits a countable cover of subsets which admit each a countable $\tau$-basis, then $(X,\tau)$ is second-countable. This is in contrast with the fact that there exist non-second-countable topological spaces which can be partitioned into finitely many second-countable subspaces (see Example A.16 in \cite{andujar_thesis}). 

In the definable context the landscape is analogous. In particular the aforementioned Example A.16 in \cite{andujar_thesis} is definable in $(\mathbb{R},+,\cdot,<)$, and (by Theorem~\ref{thm:2c}) it fails to be definably second-countable, while admitting a partition into two definably second-countable subspaces. We introduce a definable analogue of the property of having a countable $\tau$-basis. We will rely on this property to prove that definable topological spaces are definably second-countable, by induction arguments relying on o-minimal dimension. 

\begin{definition}\label{dfn:nice}
Let $\Mm$ be an o-minimal structure. Let $(X,\tau)$ be a definable topological space and $Y\subseteq X$ be a subset. We say that $Y$ is \emph{\nice{}}\footnote{A more descriptive name for being \nice{} would be being \emph{definably $\tau$-second-countable}, however the chosen more succinct terminology seems to make the text more readable.} if it is definable and there exists a definable $\tau$-basis $\{ U_b : b \in B\}$ for $Y$ satisfying that, for every $y\in Y$ and $b\in B$ with $y\in U_b$, 
\[
\dim \{ c\in B : y\in U_c \subseteq U_b\} = \dim B. 
\]
We write that $\{ U_b : b \in B\}$ \emph{witnesses} that $Y$ is \nice{}. 
\end{definition}

Clearly a definable topological space $(X,\tau)$ in an o-minimal structure is definably second-countable if and only if $X$ is \nice{}. 
The use of Definition~\ref{dfn:nice} in this paper relies on the characterization provided by Lemma~\ref{lem:nice} below. We first present an easy remark. 

\begin{remark}\label{rem:nice}
Let $\Mm$ be an o-minimal structure, $(X,\tau)$ be a definable topological space, $Y\subseteq X$ be definable subset and $\Bb=\{U_b : b \in B\}$ be a definable $\tau$-basis for $Y$. By definition of $\tau$-basis, observe that $\Bb$ witnesses that $Y$ is \nice{} if an only if, for every $y\in Y$ and every definable neighborhood (in $\tau$) $A$ of $y$, it holds that $\dim\{b \in B : y \in U_b \subseteq A\} = \dim B$. 
\end{remark}

\begin{lemma}\label{lem:nice}
Let $\Mm$ be an o-minimal structure. Let $(X,\tau)$ be a definable topological space. Any finite union of \nice{} subsets of $X$ is \nice{}. In particular $(X,\tau)$ is definably second-countable if and only if it admits a finite covering by \nice{} sets.      
\end{lemma}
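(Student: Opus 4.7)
The plan is to prove the first assertion by taking the disjoint union (in the sense of Section~\ref{sec:notation}) of the witnessing families. Suppose $Y_1, \ldots, Y_k$ are \nice{} subsets of $X$ and let $\Bb_i = \{ U^i_b : b \in B_i\}$ witness that $Y_i$ is \nice{} (we may assume each $B_i$ is nonempty, else $Y_i=\emptyset$ may be dropped). Form the definable family $\Bb = \{ V_{(a,\mathbf{b})} : (a,\mathbf{b}) \in M^k \times \prod_{i=1}^k B_i \}$ as in Section~\ref{sec:notation}, where $V_{(a,\mathbf{b})} = U^{i(a)}_{b_{i(a)}}$. Its index set has dimension $k + \sum_{i=1}^k \dim B_i$. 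I would then argue that $\Bb$ witnesses that $Y := \bigcup_i Y_i$ is \nice{}.

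First I would check that $\Bb$ is a $\tau$-basis for $Y$. Given $y \in Y$ and an open neighborhood $A$ of $y$, choose $i$ with $y\in Y_i$; since $\Bb_i$ is a $\tau$-basis for $Y_i$, there is some $b \in B_i$ with $y \in U^i_b \subseteq A$. Pick any $\mathbf{b}$ with $b_i = b$ and any $a\in M^k$ with $i(a)=i$ (the set of such $a$ is a nonempty open subset of $M^k$, hence of dimension $k$); then $V_{(a,\mathbf{b})} = U^i_b$ and $y \in V_{(a,\mathbf{b})}\subseteq A$, as needed.

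Next I would verify the dimension condition of Definition~\ref{dfn:nice} via Remark~\ref{rem:nice}. Fix $y\in Y$ and a definable neighborhood $A$ of $y$; let $j$ be any index with $y \in Y_j$. By niceness of $Y_j$ and Remark~\ref{rem:nice}, the set $S_j := \{ b' \in B_j : y \in U^j_{b'} \subseteq A\}$ has dimension $\dim B_j$. The subset of indices $(a',\mathbf{b}')$ satisfying $i(a') = j$, $b'_j \in S_j$, and $b'_l \in B_l$ for $l\neq j$ is contained in $\{ (a',\mathbf{b}') : y \in V_{(a',\mathbf{b}')} \subseteq A\}$, and has dimension $k + \dim S_j + \sum_{l\neq j} \dim B_l = k + \sum_l \dim B_l$, which equals the dimension of the full index set. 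Hence $\Bb$ witnesses that $Y$ is \nice{}.

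For the ``in particular'' clause I would invoke the observation recorded just before the lemma, namely that $(X,\tau)$ is definably second-countable exactly when $X$ itself is \nice{}: one direction of the equivalence is immediate, and the other follows from the first part applied to a finite \nice{} cover of $X$. The substantive step is really the dimension calculation, which hinges on the fact that the disjoint union construction contributes a full-dimensional ``choice of index $i$'' factor ($M^k$) and leaves the other coordinates $\mathbf{b}'_l$ free, so that the niceness condition need only be exploited in one coordinate $j$ chosen in accordance with $y$. I do not anticipate a serious obstacle beyond bookkeeping: the one subtlety worth emphasising is that $\Bb_i$ is a $\tau$-basis only at points of $Y_i$, which is why the argument must be made separately for each $y$ after selecting an appropriate $j$ with $y \in Y_j$.
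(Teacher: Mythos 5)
Your proof is correct and follows essentially the same route as the paper: form the disjoint union of the witnessing families and verify the dimension condition via Remark~\ref{rem:nice}, using that the coordinates $b'_l$ for $l\neq j$ are free. The only difference is cosmetic: the paper reduces to two sets by induction and first pads the index sets so that $\dim B_1=\dim B_2$, a step your computation shows to be unnecessary since the unconstrained factors already contribute their full dimension.
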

\begin{proof}
By following an inductive argument in the general case, it suffices to prove that the union of two \nice{} sets is \nice{}. Hence
let $X_1$ and $X_2$ denote two \nice{} subsets of $X$. For each $i\in \{1,2\}$, let $\Bb_i=\{U^i_b : b\in B_i\}$ be a definable $\tau$-basis for $X_i$ witnessing that $X_i$ is \nice{}. 

Let $m=\max \{\dim B_1, \dim B_2\}$. If there exists some fixed $i\in \{1,2\}$ with $\dim B_i <m$, let $B'_i = B_i \times M^{m-\dim B_i}$ and let $V^i_{(b,c)} = U^i_b$ for each $(b,c)\in B'_i$ with $b \in B_i$. Applying Fact~\ref{fact:fiber}, it is easy to see that $\dim B'_i=m$, and furthermore that $\{ V^i_{b'} : b'\in B'_i\}$ still witnesses that $X_i$ is \nice{}. Hence, by passing from $\Bb_i$ to $\{ V^i_{b'} : b'\in B'_i\}$ if necessary, we may assume that $\dim B_1 = \dim B_2$. 

Now observe that the disjoint union (described in Section~\ref{sec:notation}) of $\Bb_1$ and $\Bb_2$ is a $\tau$-basis for $X_1 \cup X_2$, and furthermore, by Remark~\ref{rem:nice} (and using for example Fact~\ref{fact:fiber}), it witnesses that $X_1 \cup X_2$ is \nice{}.
\end{proof}

We now present some simple facts about definable second-countability. 

\begin{proposition} \label{prop:bf} 
Let $\Mm$ be an o-minimal structure. The following hold. 
    \begin{enumerate}[(1)]
        \item \label{itm:prop-bf-1} A definable subspace of a definably second-countable definable topological space is also definably second-countable.  
        \item \label{itm:prop-bf-2} Any finite subset of a definable topological space $(X,\tau)$ is \nice{}. In particular any finite definable topological space is definably second-countable. 
        \item \label{itm:prop-bf-3} Any definable set with the o-minimal Euclidean topology is definably second-countable.
        \item \label{itm:prop-bf-manifold} If there exists a finite covering $\{X_1, \ldots, X_n\}$ of $X$ by definable open subsets, each of which is definably second-countable in the subspace topology, then $(X,\tau)$ is definably second-countable.
    \end{enumerate}
\end{proposition}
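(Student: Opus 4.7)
My plan is to handle (1), (3), and (4) as essentially direct verifications, while reducing (2) to the single-point case via Lemma~\ref{lem:nice}.

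For (1), given a witness $\{U_b : b\in B\}$ to definable second-countability of $(X,\tau)$ and a definable $Y\subseteq X$, the family $\{U_b\cap Y : b\in B\}$ is a definable basis for $\tau|_Y$; for $y\in Y$ and $b\in B$ with $y\in U_b$, the set $\{c\in B : y\in U_c\cap Y\subseteq U_b\cap Y\}$ contains $\{c\in B : y\in U_c\subseteq U_b\}$, of dimension $\dim B$ by hypothesis, so the dim condition transfers.

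For (3), I would exhibit the standard definable open-box basis $\{(a_1,b_1)\times\cdots\times(a_n,b_n)\cap X : a_i<b_i\}$ indexed by $\{(\bar a,\bar b)\in M^{2n} : a_i<b_i\}$, a set of dimension $2n$; for any $x\in X$ in such a box, the parameters of sub-boxes containing $x$ form an open subset of $M^{2n}$ of full dimension $2n$, giving the dim condition. For (4), I would apply Lemma~\ref{lem:nice}: each $(X_i,\tau|_{X_i})$ being definably second-countable provides a definable basis $\{U^i_b : b\in B_i\}$ of $\tau|_{X_i}$ with the dim property, and since $X_i$ is open in $(X,\tau)$, these sets are open in $\tau$, so the family is already a definable $\tau$-basis for $X_i$ in the ambient topology satisfying the dim condition verbatim. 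Thus $X_i$ is \nice{}, and so is $X=\bigcup_i X_i$ by Lemma~\ref{lem:nice}.

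For (2), by Lemma~\ref{lem:nice} it suffices to show each singleton $\{x\}$ is \nice{}. Starting from a definable basis $\{V_c : c\in C\}$ of $\tau$, let $C_x=\{c\in C : x\in V_c\}$ and, for $b\in C_x$, set $C_x^{\subseteq b}=\{c\in C_x : V_c\subseteq V_b\}$. Define $d^*=\min_{b\in C_x}\dim C_x^{\subseteq b}$ and $B^*=\{b\in C_x : \dim C_x^{\subseteq b}=d^*\}$; the first key observation is that $c\in C_x^{\subseteq b}$ with $b\in B^*$ forces $c\in B^*$, since $C_x^{\subseteq c}\subseteq C_x^{\subseteq b}$ gives $\dim C_x^{\subseteq c}\le d^*$, and minimality yields equality. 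Fixing any $b_0\in B^*$, I would take $\{V_c : c\in C_x^{\subseteq b_0}\}$ as the witnessing family: it is a neighborhood basis of $x$ because, given any open neighborhood $A$ of $x$, the intersection $A\cap V_{b_0}$ contains some basic $V_c\ni x$, necessarily with $c\in C_x^{\subseteq b_0}$; and for $b\in C_x^{\subseteq b_0}\subseteq B^*$ one has $\{c\in C_x^{\subseteq b_0} : V_c\subseteq V_b\}=C_x^{\subseteq b}$, of dimension $d^*=\dim C_x^{\subseteq b_0}$, giving the dim condition. The main obstacle lies in this refinement step, specifically in establishing that passage from $C_x$ to $C_x^{\subseteq b_0}$ simultaneously preserves being a neighborhood basis and aligns all the relevant dimensions to $d^*$.
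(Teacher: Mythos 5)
Your proposal is correct and follows essentially the same route as the paper: (1), (3) and (4) are handled by the same direct verifications (intersecting a witnessing basis with $Y$, the open-box basis of dimension $2n$, and passing to \nice{}-ness via Lemma~\ref{lem:nice} using openness of the $X_i$), and (2) is reduced to singletons via Lemma~\ref{lem:nice} and settled by exactly the paper's minimal-dimension refinement, with your ``key observation'' about $B^*$ being just a slightly more explicit packaging of the paper's one-line minimality argument. The step you flag as the main obstacle is in fact fully justified by what you wrote, so there is no gap.
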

\begin{proof}
Let $(X,\tau)$ be a definable topological space. 

We begin by proving \ref{itm:prop-bf-1}. Hence suppose that $(X,\tau)$ is definably second-countable, witnessed by a definable basis $\Bb=\{U_b : b\in B\}$. Let $Y$ be a definable subset of $X$. Observe that the family $\{Y \cap U_b : b\in B\}$ is a basis for the subspace topology $\tau|_Y$ that witnesses that $(Y,\tau|_Y)$ is definably second-countable.   

We now prove \ref{itm:prop-bf-2}. 
By Lemma~\ref{lem:nice}, it suffices to prove that every singleton subset of $(X,\tau)$ is \nice{}. Hence fix $x\in X$, and let $\{ U_b : b \in B_x\}$ be a definable basis of open neighborhoods of $x$ (in particular a definable $\tau$-basis for $\{x\}$).
For each $b\in B_x$ let
$
B_{x,b} = \{ c \in B_x : x \in U_c \subseteq U_b \}. 
$
Pick some $b_x \in B_x$ satisfying that the set $B_{x,b_x}$ is of minimum dimension among all sets $B_{x,b}$ with $b \in B_x$.  
Clearly the definable family $\{ U_b : b\in B_{x,b_x}\}$ is a basis of neighborhoods of $x$. Moreover by choice of $b_x$ we have that, for every $b\in B_{x,b_x}$, it holds that 
\[
\dim \{ c \in B_{x,b_x} : x \in U_c \subseteq U_b\} = \dim B_{x,b} = \dim B_{x,b_x}.
\]
So $\{x\}$ is \nice{} as desired. 

We prove \ref{itm:prop-bf-3}. By \ref{itm:prop-bf-1} it suffices to show that, for any $n$, the space $M^n$ with the o-minimal Euclidean topology is definably second-countable. 

Consider the definable basis for the o-minimal Euclidean topology on $M^n$ given by boxes, namely $\Bb=\{(a_1, b_1) \times \cdots \times (a_n,b_n) : a_i, b_i\in M,\, a_i <b_i, \text{ for } i\leq n\}$. Let $B= \{ (a_1,b_1, \ldots, a_n,b_n) : a_i, b_i\in M,\, a_i <b_i, \text{ for } i\leq n\}$ and, for any $\bm{b}=(a_1,b_1, \ldots, a_n,b_n) \in B$, let $U(\bm{b}) = (a_1, b_1) \times \cdots \times (a_n,b_n)$. Note that $\dim B=2n$.
Now observe that, for every $x=(x_1,\ldots,x_n)\in M^n$ and $\bm{b}=(a_1,b_1, \ldots, a_n,b_n)\in B$, it holds that 
\begin{align}\label{eqn:EU-2C}
\{ \bm{c}\in B : x \in U(\bm{c})\subseteq U(\bm{b})\}=  
&\{  (a'_1,b'_1,\ldots, a'_n,b'_n) : a'_i, b'_i \in M, \notag\\
&\, a_i \leq a'_i < x_i < b'_i \leq b_i, \text{ for } i\leq n\}, \tag{$\dagger$}
\end{align}
and so, whenever $x\in U(\bm{b})$, the set in~\eqref{eqn:EU-2C} has dimension $2n = \dim B$. It follows that $M^n$ with the o-minimal Euclidean topology is definably second-countable.

Finally, we prove~\ref{itm:prop-bf-manifold}. Let $\{ X_i : i \leq n\}$ be a finite covering of $X$ by definable open subsets and, for each $i\leq n$, let $\Bb_i$ be a definable basis for the subspace topology $\tau|_{X_i}$ that witnesses that $(X_i,\tau|_{X_i})$ is definably second-countable. For any $i \leq n$, since $X_i$ is open, the family $\Bb_i$ is a $\tau$-basis for $X_i$, which furthermore witnesses that $X_i$ is \nice{}. By Lemma~\ref{lem:nice} we conclude that $(X,\tau)$ is definably second-countable. 
\end{proof}

Observe that~\ref{itm:prop-bf-3} and~\ref{itm:prop-bf-manifold} in Proposition~\ref{prop:bf} above imply that the definable manifold spaces of van den Dries~\cite[Chapter 10]{dries98} are definably second-countable.

We now show that definable second-countability implies definable separability.

\begin{proposition} \label{prop:2c-implies-sep}
Let $\Mm$ be an o-minimal structure. Let $(X,\tau)$ be a definable topological space. If $(X,\tau)$ is definably second-countable then it is definably separable. 
\end{proposition}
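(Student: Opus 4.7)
The plan is to argue by contraposition: assume $(X,\tau)$ fails to be definably separable, witnessed by an infinite definable family $\{V_a : a \in A\}$ of pairwise disjoint open sets, and deduce that it cannot be definably second-countable. After discarding the empty set if it appears and applying Lemma~\ref{lem:inf-family}, I may reindex by an interval $I \subseteq M$ and pass to a subfamily $\{V_t : t \in I\}$ of pairwise distinct non-empty open sets that remain pairwise disjoint, since both disjointness and non-emptiness are inherited by subfamilies.

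Suppose for contradiction that $(X,\tau)$ is definably second-countable, witnessed by a basis $\{U_b : b \in B\}$ as in Definition~\ref{dfn:2nd-count}. To each $t \in I$ I associate the definable set
\[
B(t) := \{ b \in B : \emptyset \neq U_b \subseteq V_t\}.
\]
Pairwise disjointness of the $V_t$ immediately forces the $B(t)$ to be pairwise disjoint, since any common $b$ would place the non-empty $U_b$ inside $V_t \cap V_s = \emptyset$.

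The crucial step is to show $\dim B(t) = \dim B$ for every $t \in I$. Fixing $t$, I pick any $x \in V_t$ using non-emptiness, and, since $\{U_b : b \in B\}$ is a basis, any $b_0 \in B$ with $x \in U_{b_0} \subseteq V_t$. The witnessing property in Definition~\ref{dfn:2nd-count} then gives $\dim \{c \in B : x \in U_c \subseteq U_{b_0}\} = \dim B$, and each such $c$ lies in $B(t)$. With $\dim B(t) = \dim B$ established, every $B(t)$ is non-empty, so combined with pairwise disjointness the family $\{B(t) : t \in I\}$ is infinite and definable. Lemma~\ref{lem:disjoint-union-dim} therefore yields $\dim \bigcup_{t \in I} B(t) > \dim B$, contradicting $\bigcup_{t \in I} B(t) \subseteq B$.

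The only point requiring real care is the dimension lower bound $\dim B(t) \geq \dim B$, which is precisely what the witnessing condition in Definition~\ref{dfn:2nd-count} is designed to supply, applied at an arbitrary point of $V_t$. The rest is a direct invocation of Lemma~\ref{lem:disjoint-union-dim}, so I expect no serious obstacle beyond this dimension equality.
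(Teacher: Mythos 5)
Your proposal is correct and follows essentially the same route as the paper: both associate to each disjoint open set $V_t$ the set of indices of non-empty basic sets contained in it, note these index sets are pairwise disjoint, and play the dimension condition of Definition~\ref{dfn:2nd-count} against Lemma~\ref{lem:disjoint-union-dim}. The only cosmetic differences are your (harmless but unnecessary) reindexing by an interval via Lemma~\ref{lem:inf-family} and the reversed order of the contradiction --- you establish $\dim B(t)=\dim B$ for all $t$ and contradict the disjoint-union lemma, whereas the paper uses that lemma to produce one low-dimensional $C(b)$ and contradicts the witnessing property.
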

\begin{proof}
We assume that $(X,\tau)$ is definably second-countable and not definably separable and reach a contradiction. Hence let $\{ U_c: c \in C\}$ be a definable basis that witnesses that $(X,\tau)$ is definably second-countable, and let $\Aa=\{A_b: b\in B_{\Aa}\}$ be an infinite definable family of pairwise disjoint open sets. We may clearly assume that $\emptyset \notin \Aa$.

For each $b\in B_{\Aa}$, let $C(b)=\{ c \in C : \emptyset \neq U_c \subseteq A_b\}$. Observe that the definable family $\{C(b) : b\in B_{\Aa}\}$ is infinite and contains pairwise disjoint sets. By Lemma~\ref{lem:disjoint-union-dim} we may fix some $b\in B_{\Aa}$ with $\dim C(b) < \dim C$.  Since $\emptyset \notin \Aa$ then $A_b \neq \emptyset$. Let us fix some $x\in A_b$ and, applying the definition of basis, some $c \in C$ with $x \in U_c \subseteq A_b$. Then it holds that 
\[
0\leq \dim \{c' \in C : y \in U_{c'} \subseteq U_c\} \leq \dim C(b) < \dim C.
\]
However this contradicts that $\{ U_c: c \in C\}$ witnesses that $(X,\tau)$ is definably second-countable.
\end{proof}

\begin{remark}\label{remark:separable_euclidean_topology}
It follows from Propositions~\ref{prop:bf}\ref{itm:prop-bf-3} and~\ref{prop:2c-implies-sep} that any definable set in an o-minimal structure with the o-minimal Euclidean topology is definably separable. By the sentence below Proposition~\ref{prop:bf}, the same is true for definable manifold spaces.
\end{remark}  

\section{Equivalence with the classical properties}\label{sec:main}

In this section we prove our main results. Primarily, we show that definable separability and definable second-countability are equivalent to their classical counterparts among definable topological spaces in o-minimal expansions of $(\mathbb{R},<)$. We divide this result into Theorems~\ref{thm:separability} and~\ref{thm:2c}. We also show in Proposition~\ref{prop:families} that, given a definable family of topological spaces (e.g. a definable family of subspaces of a fixed definable topological space), the subfamilies of those that are definably separable and those that are definably second-countable are both definable (i.e. the properties of definable separability and definable second-countability are definable in families). 

The following lemma is our main tool in proving Theorem~\ref{thm:separability}.

\begin{lemma}\label{lem:sep-dense}
Let $(X,\tau)$ be a definably separable definable topological space. Let $\Aa=\{A_b : b\in B\}$ be a definable family of open subsets of $X$ and let $n$ be such that $\dim A_b \leq n$ for every $b\in B$. Then there exists a definable open set $Z\subseteq \cup \Aa$ such that $\dim Z \leq n$ and $\cup \Aa \subseteq cl(Z)$. (In particular if $(X,\tau)$ has small boundaries then $\dim \cup\Aa \leq n$.)

Moreover, $Z$ can be chosen to be definable over the same parameters as the family $\Aa$ and the topology $\tau$. 
\end{lemma}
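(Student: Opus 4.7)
The plan is to exhibit $Z$ as a finite union of elements of $\Aa$. Any such finite union $A_{b_1}\cup\cdots\cup A_{b_k}$ is automatically open, contained in $\cup\Aa$, and of dimension at most $n$ (as a finite union of sets of dimension at most $n$), so the only nontrivial content is to arrange that for some tuple the union is dense in $\cup\Aa$.

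To find the tuple, I fix a definable linear order $<$ on $M^{|b|}$ (say lexicographic, inherited from the ambient order of $\Mm$) and, for each $b\in B$, define the \emph{peel}
\[
D(b) \;=\; A_b\setminus cl\Bigl(\bigcup_{c\in B,\,c<b} A_c\Bigr),
\]
which is open since the subtracted set is closed. A direct check shows the family $\{D(b):b\in B\}$ is pairwise disjoint: for $b<b'$, $D(b')$ is disjoint from $cl(A_b)$, and hence from $D(b)\subseteq A_b$. By definable separability of $(X,\tau)$ only finitely many $D(b)$ are non-empty, yielding a finite definable subset $B''\subseteq B$. I then take $Z:=\bigcup_{b\in B''} A_b$, which is open, contained in $\cup\Aa$, of dimension at most $n$, and definable over the parameters of $\Aa$, $\tau$, and the ambient structure.

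The main step is verifying $\cup\Aa\subseteq cl(Z)$. If $W:=\cup\Aa\setminus cl(Z)$ is non-empty, it is a definable open subset of $X$ and, by Lemma~\ref{lem:sep-open-subspace}, still definably separable. The plan is to rerun the peel construction on the family $\{A_b\cap W : b\in B\}$: the new peels $D'(b)$ are opens of $X$ lying in $W$, pairwise disjoint among themselves, and each disjoint from every $D(b')$ with $b'\in B''$ (since those lie inside $Z\subseteq X\setminus W$). Thus $\{D(b):b\in B''\}\cup\{D'(b):b\in B, D'(b)\neq\emptyset\}$ is a definable pairwise disjoint family of opens in $X$ strictly larger than the original; iterating and packaging these rounds into a single definable family should violate definable separability, forcing $W=\emptyset$.

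The definability clause is immediate, and the parenthetical follows from $\dim cl(Z)=\dim Z\leq n$ under small boundaries. The main obstacle I expect is making the ``rerun'' step produce a genuine contradiction: each individual round yields only finitely many new peels, which is consistent with definable separability in isolation, so one must repackage the iteration as a single definable family. I anticipate this requires either parameterising the ``depth'' of the iteration by a definable subset of $M$ (so as to pull the whole sequence of rounds back into a single definable family of pairwise disjoint opens), or combining the peel construction with a dimension-theoretic bound via Fact~\ref{fact:fiber} and Lemma~\ref{lem:inf-family} that rules out such a $W$ at the first iteration.
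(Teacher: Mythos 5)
There is a genuine gap, and it is in the central construction rather than in the final ``rerun'' step you flag as the expected difficulty. The peel $D(b)=A_b\setminus cl\bigl(\bigcup_{c<b}A_c\bigr)$ only has a chance of being dense in $\cup\Aa$ when the index order is well-founded (the standard greedy argument picks the \emph{least} $b$ with $A_b$ meeting a given open set, and that least element need not exist here). Over a dense order the peels can all be empty even though $\cup\Aa$ is not: take $\Mm$ an o-minimal expansion of $(\mathbb{R},<)$, $X=M$ with the Euclidean topology (which is definably separable), $B=(0,1)$ and $A_b=(-b,b)$. Then $\bigcup_{c<b}A_c=(-b,b)$, so $cl\bigl(\bigcup_{c<b}A_c\bigr)\supseteq A_b$ and $D(b)=\emptyset$ for every $b$; your $B''$ is empty, $Z=\emptyset$, and $\cup\Aa=(-1,1)\not\subseteq cl(Z)$. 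The fallback iteration does not repair this: rerunning the peel on $\{A_b\cap W\}$ with $W=(-1,1)$ produces exactly the same empty family at every round, so no infinite pairwise disjoint family ever materialises and no contradiction with definable separability is available. (Note also that the lemma is true in this example --- one may take $Z=A_{1/2}$ --- so the failure is of the construction, not of the statement.)

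The paper's proof circumvents precisely this issue. For $B\subseteq M$ it first splits $\cup\Aa$ into the set $A^{\text{inf}}$ of points lying in infinitely many $A_b$ and its complement $A^{\text{fin}}$: on $A^{\text{fin}}$ the fibre $B(x)=\{b:x\in A_b\}$ is \emph{finite}, so $\min B(x)$ genuinely exists and the sets $A'_b=\{x\in A_b\cap A^{\text{fin}}: b=\min B(x)\}$ form a pairwise disjoint definable family whose interiors are controlled by definable separability; the set $A^{\text{inf}}$ (which is all of $\cup\Aa$ in the counterexample above) is shown separately to be open and of dimension at most $n$ via Lemma~\ref{fact:fiber-max}, and density of $Z=A^{\text{inf}}\cup\bigcup_b int(A'_b)$ is proved by maximising $|B(x)|$ over $x\notin cl(Z)$ using uniform finiteness. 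The case $B\subseteq M^k$ is then handled by induction on $k$ along fibres, not by a lexicographic order. Your proposal is missing the finite/infinite dichotomy that makes a ``first index'' argument legitimate, and some substitute for it (or for the dimension-theoretic treatment of $A^{\text{inf}}$) is indispensable.
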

\begin{proof}
We fix $(X,\tau)$ and $\Aa=\{A_b : b\in B\}$ as in the lemma. We proceed by induction on $k$, where $B \subseteq M^k$. The bulk of the proof deals with the case $k=1$. The fact that $Z$ can be chosen to be definable over the same parameters as $\Aa$ and $\tau$ will follow immediately from the proof. 

\textbf{Case $k=1$.} For every $x \in \cup\Aa$ let $B(x) = \{ b \in B : x \in A_b \}$. We partition $\cup\Aa$ into two sets as follows. Let $\Ainf=\{ x \in \cup\Aa : B(x) \text{ is infinite}\}$ and $\Afin= \cup\Aa \setminus \Ainf =\{ x \in \cup\Aa : B(x) \text{ is finite}\}$. By o-minimality these sets are definable. 

\begin{claim}\label{claim:Ainf-open}
The set $\Ainf$ is open and $\dim \Ainf \leq n$.
\end{claim}
\begin{claimproof}
The fact that $\dim \Ainf \leq n$ follows from Lemma~\ref{fact:fiber-max} applied to the set $\{ ( b,x ) \in B \times \Ainf : x\in A_b \}$. We show that $\Ainf$ is open. 

By o-minimal uniform finiteness there exists \mbox{$m = \max \{ |B(x)| : x \in \Afin\} < \infty$}. Fix $x\in \Ainf$. Since $B(x)$ is infinite we may pick $m+1$ parameters $b_0, \ldots, b_m \in B(x)$. Consider the open set $U= \cap_{0 \leq i \leq m} A_{b_i}$. Then we have that $x \in U$ and moreover every $y \in U$ satisfies that $|B(y)|>m$, which, by definition of $m$, implies that $B(y)$ is infinite, and so $y \in \Ainf$. So $x \in U \subseteq \Ainf$. Hence $\Ainf$ is open. 
\end{claimproof}

Now, for every $b \in B$, let $A'_b = \{ x \in A_b \cap \Afin : b = \min B(x)\}$. We define 
\begin{equation} \tag{$\dagger$} \label{eqn:sep-dense}
Z = \Ainf \cup  \bigcup\{ int(A'_b) : b \in B \}.
\end{equation}

\begin{claim}
The set $Z$ is open, definable, and satisfies that $\dim Z \leq n$.
\end{claim}
\begin{claimproof}
It is clear that $Z$ is definable and, by Claim~\ref{claim:Ainf-open}, open. 

Observe that the family of sets $\{ A'_b : b \in B\}$ is definable and pairwise disjoint. Hence the same is clearly true of the family $\{ int(A'_b) : b \in B\}$ and so, by definable separability of $(X,\tau)$, this latter family is finite. In particular, since $\dim A_b \leq n$ for every $b\in B$, we have that $\dim \cup\{ int(A'_b) : b \in B\} \leq n$. Applying Claim~\ref{claim:Ainf-open} we conclude that $\dim Z \leq n$. 
\end{claimproof}

We conclude the proof of the case $k=1$ by showing that $\cup\Aa \subseteq cl(Z)$.
Towards a contradiction suppose that $\cup\Aa \setminus cl(Z) \neq \emptyset$. Since $\Ainf \subseteq Z$, note that $\cup\Aa \setminus cl(Z) \subseteq \Afin$. Hence, by uniform finiteness, we may pick and fix $x \in \cup\Aa \setminus cl(Z)$ satisfying that $|B(x)|= \max \{ |B(y)| : y \in \cup\Aa \setminus cl(Z)\}$. Now, since $x \notin cl(Z)$, there exists an open neighborhood $U$ of $x$ such that $U \cap Z = \emptyset$. Since $x \in \Afin$, we may pick $U$ so that it also satisfies that $x \in U \subseteq \cap_{b \in B(x)} A_b$. Note that every $y \in U$ satisfies that $y \notin cl(Z)$ and $B(x) \subseteq B(y)$, and so by maximality of $|B(x)|$ we have that $B(y) = B(x)$. Finally, let $b= \min B(x)$. Since $x\notin int(A'_b) \subseteq Z$ there must exist $y \in U$ such that $y \in A_b \setminus A'_b$. However, by definition of the set $A'_b$, this means that $\min B(y) < b$, which contradicts that $B(y) = B(x)$.     

\textbf{Case $k>1$.} 
Consider the projection $\pi(B)$ of $B$ to the first $k-1$ coordinates.
For every $c\in \pi(B)$, consider the definable family $\Aa_c=\{ A_{c,t} : t\in B_c\}$. By the case $k=1$ there exists a definable open set $Z_c\subseteq \cup\Aa_c$ such that $\dim Z_c \leq n$ and $\cup\Aa_c \subseteq cl(Z_c)$.  
By the definition of the set $Z$ in the proof of the case $k=1$ note that the sets $Z_c$ may be chosen to be definable uniformly in $c\in\pi(B)$ (over the same parameters as the family $\Aa$ and the topology $\tau$). 

We now apply the induction hypothesis to the definable family $\{ Z_c : c\in \pi(B)\}$, and derive that there exists a definable open set $Z \subseteq \cup_{c \in \pi(B)} Z_c$ with $\dim Z\leq n$ and $\cup_{c\in \pi(B)} Z_c \subseteq cl(Z)$. In particular 
\begin{equation*}
\cup \Aa \subseteq \bigcup_{c\in \pi(B)} cl (Z_c) \subseteq cl\left(\bigcup_{c\in \pi(B)} Z_c \right) \subseteq cl (Z).
\end{equation*}
\end{proof}

We illustrate Lemma~\ref{lem:sep-dense} through the following example. 

\begin{example}\label{ex:sep-dense}
For any $n>1$ consider the following non-Hausdorff definable topology on $M^n$. Fix a parameter $a \in M^{n-1}$. For every $x \in M^n$, basic open neighborhoods of $x$ are sets of the form   
$\{x\} \cup ((b,\infty) \times \{ a \}) \subseteq M^n$, for $b \in M$.
Since any two non-empty open sets have non-empty intersection, it follows easily that this space is definably separable. 

For every $x\in M^n$, consider the open set $A_x = \{x\} \cup (M \times \{a\}) \subseteq M^n$. The family $\{A_x : x \in M^n\}$ is definable and $\dim A_x =1$ for every $x \in M^n$. On the other hand $\cup_{x\in M^n} A_x = M^n$, so $\dim (\cup_{x\in M^n} A_x) = n$. Nevertheless, the definable open subspace $M\times \{a\}$ is dense and one-dimensional. 
\end{example}

Example~\ref{ex:sep-dense} shows that we cannot strengthen Lemma~\ref{lem:sep-dense} by changing its conclusion to $\dim \cup\Aa \leq n$ (unless the space has small boundaries). On the other hand, this example is non-Hausdorff. Since there exist Hausdorff definable topological spaces without small boundaries (e.g. the split interval), it is open whether this strengthening of the lemma holds for all Hausdorff spaces.

Through the next lemma we point out that the construction in the proof of Lemma~\ref{lem:sep-dense} yields that the property of being definably separable can be expressed with a single first-order sentence. We will use this fact in Proposition~\ref{prop:families} to show that definable separability is definable in families.  

\begin{lemma}\label{lem:sep-X(n)}
Let $\Mm$ be an o-minimal structure. Let $(X,\tau)$ be a definable topological space, for which we fix a definable basis $\Bb=\{ U_b : b \in B\}$. For every $n$, let $\Bb(n) \subseteq \Bb$ be the definable family of basic open sets of dimension at most $n$, and set $X(n)=\cup\Bb(n)$.

The space $(X,\tau)$ is definably separable if and only if, for every $n\leq \dim X$, there exists a definable open set $Z(n) \subseteq X(n)$ such that $X(n) \subseteq cl(Z(n))$ and $\dim Z(n) \leq n$.
\end{lemma}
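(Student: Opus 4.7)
The plan is to prove each direction, with the forward one being essentially a direct application of Lemma~\ref{lem:sep-dense}. Assuming $(X,\tau)$ is definably separable, for each $n\leq\dim X$ the family $\Bb(n)$ is a definable subfamily of $\Bb$ consisting of open sets of dimension at most $n$ (using the definability of o-minimal dimension), so applying Lemma~\ref{lem:sep-dense} to $\Bb(n)$ immediately yields an open definable set $Z(n)\subseteq X(n)=\cup\Bb(n)$ with $\dim Z(n)\leq n$ and $X(n)\subseteq cl(Z(n))$.

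For the converse I would argue by contradiction. Suppose the sets $Z(n)$ exist as described but $(X,\tau)$ is not definably separable, witnessed by an infinite definable family $\Aa=\{A_b:b\in B_\Aa\}$ of pairwise disjoint, non-empty open sets. By Lemma~\ref{lem:inf-family} I may pass to a subfamily $\{A_t:t\in I\}$ with $I\subseteq M$ an interval and the $A_t$ pairwise distinct (hence still pairwise disjoint). The definable map $t\mapsto \dim A_t$ takes finitely many values, so by o-minimality it is constant with some value $n$ on an infinite subinterval $J\subseteq I$. Since each $A_t$ with $t\in J$ is open of dimension exactly $n$, every basic open subset $U_b\subseteq A_t$ satisfies $\dim U_b\leq n$, hence $A_t\subseteq X(n)\subseteq cl(Z(n))$.

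Now for each $t\in J$, since $A_t$ is a non-empty open set contained in $cl(Z(n))$, the set $A_t\cap Z(n)$ is non-empty and dense in $A_t$; by the o-minimal invariance of dimension under closure, $\dim(A_t\cap Z(n))\geq \dim A_t=n$. Thus $\{A_t\cap Z(n):t\in J\}$ is an infinite definable family of pairwise disjoint sets each of dimension at least $n$, and Lemma~\ref{lem:disjoint-union-dim} gives $\dim\bigcup_{t\in J}(A_t\cap Z(n))>n$, contradicting the fact that this union lies inside $Z(n)$, which has dimension at most $n$. The main subtlety lies in extracting a constant-dimension subfamily using Lemma~\ref{lem:inf-family} together with o-minimal piecewise constancy, and in exploiting the equality $\dim cl(Y)=\dim Y$ to force each $A_t\cap Z(n)$ to attain dimension $n$.
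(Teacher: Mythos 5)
The forward direction is fine and matches the paper: apply Lemma~\ref{lem:sep-dense} to each family $\Bb(n)$. The converse, however, has a genuine gap at the step where you claim that, because $A_t\cap Z(n)$ is dense in $A_t$, ``by the o-minimal invariance of dimension under closure'' one gets $\dim(A_t\cap Z(n))\geq\dim A_t=n$. The identity $\dim cl(Y)=\dim Y$ is a fact about the \emph{Euclidean} topology (equivalently, about spaces with small boundaries); here the closure and the density are with respect to the arbitrary definable topology $\tau$, and for such topologies a dense definable open subset can have strictly smaller dimension than the set it is dense in. The paper makes exactly this point with Example~\ref{ex:sep-dense}: a definably separable topology on $M^n$ in which the one-dimensional open set $M\times\{a\}$ is dense in the $n$-dimensional whole space. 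So from your setup you only know that the $A_t\cap Z(n)$ are nonempty, pairwise disjoint and open, which via Lemma~\ref{lem:disjoint-union-dim} yields merely $\dim Z(n)>0$ --- no contradiction with $\dim Z(n)\leq n$ unless $n=0$.

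The paper's proof avoids this by a different choice of $n$: it takes $n$ \emph{minimal} such that the family $\{X(n)\cap A_b : b\in B_{\Aa}\}$ is infinite. Then each $Z(n)\cap A_b$ is open, and if it had dimension $<n$ it would be covered by basic open sets of dimension $\leq n-1$ and hence lie in $X(n-1)$; minimality of $n$ forces this to happen for only finitely many $b$. Thus infinitely many of the pairwise disjoint sets $Z(n)\cap A_b$ have dimension at least $n$, and Lemma~\ref{lem:disjoint-union-dim} gives $\dim Z(n)>n$, the desired contradiction. Your argument could be repaired by replacing the constant-dimension extraction (which is not needed) with this minimality/descent idea; as written, the density-implies-dimension step is false in the generality required.
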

\begin{proof}
The ``only if" direction is given by Lemma~\ref{lem:sep-dense}, applied to the families $\Bb(n)$. We prove the ``if" implication.

Suppose that $(X,\tau)$ is not definably separable, and let $\Aa=\{A_b : b\in B_{\Aa}\}$ be an infinite definable family of pairwise disjoint open sets. Let us fix the minimum $n$ such that the family $\{ X(n) \cap A_b : b \in B_{\Aa}\}$ is infinite (clearly $0 \leq n\leq \dim X$). Let $Z(n)$ be as described in the lemma. We reach a contradiction by showing that $\dim Z(n) > n$. 

Since $X(n) \subseteq cl (Z(n))$, observe that the family of pairwise disjoint open sets $\mathcal{Z}=\{ Z(n) \cap A_b : b \in B_{\Aa}\}$ is infinite. Now, for any $b \in B_{\Aa}$ note that, if $\dim(Z(n) \cap A_b) < n$, then either $n=0$ and $Z(n) \cap A_b=\emptyset$, or otherwise $n>0$ and $Z(n) \cap A_b \subseteq X(n-1)$. By choice of $n$, it follows that the subfamily of sets in $\mathcal{Z}$ of dimension less than $n$ is finite, since otherwise we would have $n>0$ and the family $\{X(n-1) \cap A_b : b\in B_{\Aa} \}$ would be infinite. Hence
the family $\{ Z(n) \cap A_b: b \in B_{\Aa},\, \dim(Z(n) \cap A_b) \geq n\}$ is an infinite definable family of pairwise disjoint subsets of $Z(n)$ of dimension at least $n$. By Lemma~\ref{lem:disjoint-union-dim}, we conclude that $\dim Z(n) > n$. 
\end{proof}

In the case where $(X,\tau)$ has small boundaries Lemma~\ref{lem:sep-dense} can be simplified to provide a more straightforward characterization of definable separability. 

\begin{proposition}
Let $\Mm$ be an o-minimal structure. Let $(X,\tau)$ be a definable topological space with small boundaries. That is, $\dim(cl(Y)\setminus Y) < \dim Y$ for every definable subset $Y\subseteq X$. We fix a definable basis for $\tau$ and, for every $0 \leq n\leq \dim X$, let $X(n)$ be the union of all basic open sets of dimension at most $n$. Then $(X,\tau)$ is definably separable if and only if $\dim X(n) \leq n$ for every $n$. 
\end{proposition}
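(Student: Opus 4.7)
The plan is to prove both implications as essentially direct corollaries of Lemmas~\ref{lem:sep-dense} and~\ref{lem:sep-X(n)}, with the small-boundaries hypothesis serving only to upgrade the conclusion of Lemma~\ref{lem:sep-dense} from ``contained in the closure of a low-dimensional set'' to a genuine dimension bound.

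For the forward direction, I would fix $n \leq \dim X$ and apply Lemma~\ref{lem:sep-dense} to the definable family $\Bb(n)$ of basic open sets of dimension at most $n$. This yields a definable open set $Z \subseteq X(n)$ with $\dim Z \leq n$ and $X(n) \subseteq cl(Z)$. Since $(X,\tau)$ has small boundaries, $\dim(cl(Z) \setminus Z) < \dim Z \leq n$, so $\dim cl(Z) \leq n$, and therefore $\dim X(n) \leq \dim cl(Z) \leq n$.

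For the backward direction, I would invoke Lemma~\ref{lem:sep-X(n)} with the canonical choice $Z(n) := X(n)$ itself. Indeed, $X(n) = \cup \Bb(n)$ is open as a union of basic open sets, definable because $\Bb(n)$ is a definable subfamily of the fixed basis, trivially satisfies $X(n) \subseteq cl(X(n))$, and has $\dim X(n) \leq n$ by hypothesis. Lemma~\ref{lem:sep-X(n)} then delivers definable separability.

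There is no real obstacle here; the content of the proposition lies entirely in having Lemmas~\ref{lem:sep-dense} and~\ref{lem:sep-X(n)} already available, and in observing that under small boundaries one may pass from a ``dense low-dimensional open subset'' to a bound on the dimension of the ambient set.
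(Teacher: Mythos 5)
Your proof is correct and follows essentially the same route as the paper: both directions reduce to Lemma~\ref{lem:sep-X(n)} (equivalently, to Lemma~\ref{lem:sep-dense} applied to the families $\Bb(n)$), with the small-boundaries hypothesis used only to pass from ``$X(n)$ is contained in the closure of an at most $n$-dimensional set'' to ``$\dim X(n)\leq n$'', and with $Z(n):=X(n)$ as the canonical witness in the converse direction.
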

\begin{proof}
Having small boundaries implies that, for every $n$, if $Z(n)$ is a definable dense subset of $X(n)$, then $\dim Z(n) = \dim cl(Z(n)) = \dim X(n)$. So the proposition follows from Lemma~\ref{lem:sep-X(n)}.
\end{proof}






We now prove our main theorem about definable separability. 

\begin{theorem}\label{thm:separability}
Let $\Mm$ be an o-minimal expansion of $(\mathbb{R},<)$. Let $(X,\tau)$ be a definable topological space. The following are equivalent.  
\begin{enumerate}[(1)]
\item \label{itm:separability_1} $(X,\tau)$ is definably separable. 
\item \label{itm:separability_2} $(X,\tau)$ is separable. 
\item \label{itm:separability_3} $(X,\tau)$ has the countable chain condition (\textbf{ccc}). 
\end{enumerate}
\end{theorem}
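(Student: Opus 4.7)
The implication $(2)\Rightarrow(3)$ is classical: in a separable space a pairwise disjoint family of non-empty open sets injects into any countable dense subset, hence must itself be countable. For $(3)\Rightarrow(1)$ I would argue the contrapositive. Suppose $(X,\tau)$ admits an infinite definable family $\{A_b:b\in B\}$ of pairwise disjoint open sets. Applying Lemma~\ref{lem:inf-family}, I pass to an infinite definable subfamily $\{A'_t:t\in I\}$ with $I\subseteq M=\mathbb{R}$ an interval and all $A'_t$ distinct. Since any interval in $\mathbb{R}$ is uncountable, and at most one set in a pairwise disjoint family of distinct sets can be empty, we obtain uncountably many non-empty pairwise disjoint open sets, contradicting the \textbf{ccc}.

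The main content is $(1)\Rightarrow(2)$. My plan is induction on $\dim X$, building a countable $\tau$-dense subset layer by layer through the stratification provided by Lemma~\ref{lem:sep-X(n)}. Fixing a definable basis, this yields for each $k\leq\dim X$ a definable open $Z(k)\subseteq X(k)$ with $X(k)\subseteq cl(Z(k))$ and $\dim Z(k)\leq k$. The base case $\dim X=0$ is immediate since $X$ is then finite. For the inductive step I construct countable $\tau$-dense sets $D_k\subseteq X(k)$ recursively: take $D_0=Z(0)$, which is finite (being zero-dimensional) and $\tau$-dense in $X(0)$ because $X(0)$ is open and contained in $cl(Z(0))$. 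Given $D_{k-1}$, observe that $(Z(k),\tau|_{Z(k)})$ is a definable topological space which, by Lemma~\ref{lem:sep-open-subspace}, inherits definable separability. If $\dim Z(k)<k$, the inductive hypothesis applied to $(Z(k),\tau|_{Z(k)})$ produces a countable $\tau$-dense subset of $Z(k)$, which adjoined to $D_{k-1}$ yields $D_k$, and whose density is transferred to $X(k)$ using $X(k)\subseteq cl(Z(k))$ together with openness of $X(k)$.

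The principal obstacle is the case $\dim Z(k)=k$: here the outer induction on $\dim X$ does not reduce dimension. To address this I would further decompose $Z(k)$ via o-minimal cell decomposition into finitely many cells of dimension at most $k$, reducing to the task of producing a countable $\tau$-dense subset of each top-dimensional cell $C$. On $C$ I would iterate Lemma~\ref{lem:sep-dense}, exploiting the uncountability of intervals in $M=\mathbb{R}$ together with Fact~\ref{fact:fiber}, to extract a countable family of $\tau$-basic open sets whose union is $\tau$-dense in $C$; selecting one point from each member then gives a countable $\tau$-dense subset of $C$. Making this extraction rigorous---essentially ruling out that a definable basis for $\tau|_C$ can fail to admit a countable cofinal subfamily, while definable separability forbids uncountable antichains among the basic opens---is the technical heart of the argument, and is where I expect the most care to be required.
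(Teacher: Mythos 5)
Your treatment of \ref{itm:separability_2}$\Rightarrow$\ref{itm:separability_3} and of \ref{itm:separability_3}$\Rightarrow$\ref{itm:separability_1} via Lemma~\ref{lem:inf-family} matches the paper, and your handling of the lower-dimensional strata in \ref{itm:separability_1}$\Rightarrow$\ref{itm:separability_2} (apply Lemma~\ref{lem:sep-dense} to the basic opens of dimension $<\dim X$ to get an open dense $Z$ of strictly smaller dimension, invoke Lemma~\ref{lem:sep-open-subspace} and the induction hypothesis) is exactly the paper's argument. However, the place you yourself flag as ``the technical heart'' --- producing a countable $\tau$-dense subset of the top-dimensional stratum --- is a genuine gap, and the route you sketch there (cell-decomposing $Z(n)$ and extracting a ``countable cofinal subfamily'' of basic opens from the absence of infinite definable antichains) does not obviously close it: that is essentially an attempt to derive separability from a chain condition, which fails in general topology, and nothing in your sketch explains what replaces the missing ingredient.

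The missing idea is that the top layer requires no further induction or extraction at all. Since $\Mm$ expands $(\mathbb{R},<)$, the set $X\subseteq\mathbb{R}^n$ is separable in the \emph{Euclidean} topology; fix a countable Euclidean-dense $D_2\subseteq X$. Any non-empty definable $\tau$-open set $A$ disjoint from $Y=\bigcup\{U_b:\dim U_b<\dim X\}$ must contain a basic open set of full dimension, so $\dim A=\dim X$; by o-minimality (small boundaries of the Euclidean topology / cell decomposition) $A$ then has non-empty Euclidean interior relative to $X$ and hence meets $D_2$. Taking $D=D_1\cup D_2$, where $D_1$ is the countable $\tau$-dense subset of $Z$ supplied by the induction hypothesis, finishes the proof. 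Note also that your case split ``if $\dim Z(k)<k$'' for $k<\dim X$ is unnecessary: applying Lemma~\ref{lem:sep-dense} to the family of basic opens of dimension at most $n-1$ already yields $\dim Z\leq n-1<\dim X$, so the induction hypothesis always applies there; the only stratum that resists the induction is $k=\dim X$, and that is precisely the one the Euclidean density argument disposes of.
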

\begin{proof}
The implication~\ref{itm:separability_2}$\Rightarrow$\ref{itm:separability_3} is a simple known fact in general topology. The implication~\ref{itm:separability_3}$\Rightarrow$\ref{itm:separability_1} (or rather its contrapositive) follows easily from Lemma~\ref{lem:inf-family}. We prove~\ref{itm:separability_1}$\Rightarrow$\ref{itm:separability_2}. 

Suppose that $(X,\tau)$ is definably separable. We proceed by induction on $n=\dim X$. The base case $n=0$ is immediate. We assume that $n>0$. Let us fix a definable basis $\{U_b : b\in B\}$ for $\tau$. Let $C=\{ b \in B :  \dim U_b < n\}$ and consider the definable family of sets $\Aa=\{U_c : c \in C\}$. Let $Y=\cup \Aa$. By Lemma~\ref{lem:sep-dense} there exists a definable open set $Z\subseteq Y$ with $\dim Z < n$ and $Y \subseteq cl(Z)$. Since $Z$ is open, by Lemma~\ref{lem:sep-open-subspace} the subspace $(Z,\tau|_Z)$ is definably separable. By induction hypothesis we derive that $(Z,\tau|_Z)$ is separable.  
Let $D_1$ denote a countable dense subset of $(Z,\tau|_z)$. Since $Y\subseteq cl(Z)$ note that $Y \subseteq cl(D_1)$. 

Now recall the classical fact that $X$ with the Euclidean topology is separable. Let $D_2$ denote a countable subset of $X$ that is dense in the Euclidean topology. 
Set $D=D_1\cup D_2$. This is a countable set. We show that it is dense in $(X,\tau)$. 

Let $A$ be a non-empty open set in $(X,\tau)$. By passing to a subset if necessary we may assume that $A$ is definable. If $A\cap Y \neq \emptyset$ then $A\cap D_1\neq \emptyset$. If $A\cap Y = \emptyset$ then, by definition of $Y$, it must be that $\dim A=n=\dim X$. But then, by o-minimality (the Euclidean topology has small boundaries), the set $A$ has non-empty interior in the Euclidean topology on $X$, and consequently $A\cap D_2\neq \emptyset$.    
\end{proof}

We now turn our attention to proving that a definable topological space in an o-minimal expansion of $(\mathbb{R},<)$ is definably second-countable if and only if it is second-countable (Theorem~\ref{thm:2c}). In doing so we will use the next three lemmas.  

\begin{lemma}\label{lem:count_union}
Let $\Mm$ be an o-minimal expansion of $(\mathbb{R},<)$.
Let $X\subseteq \mathbb{R}^n$ be a non-empty definable set and $\Cc$ be countable family (not necessarily definable) of definable subsets of $X$, each of dimension less than $\dim X$. Then $\Cc$ is not a cover of $X$, i.e. $\cup \Cc \subsetneq X$.  
\end{lemma}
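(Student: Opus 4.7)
My plan is to reduce to the case of an open subset of $\mathbb{R}^d$ via cell decomposition, and then apply the Baire category theorem, using the fact that any definable subset of $\mathbb{R}^d$ of dimension less than $d$ is nowhere dense.

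Set $d=\dim X$. First I would apply o-minimal cell decomposition to $X$: there exists a cell $U\subseteq X$ with $\dim U=d$. By standard o-minimality, after a suitable coordinate projection $U$ is definably homeomorphic to an open set $V\subseteq \mathbb{R}^d$ (the base of the cell is open and of dimension $d$, while the cell itself is the graph of a definable continuous map or an open strip over it, in either case definably homeomorphic to this base). Assuming towards a contradiction that $\Cc$ covers $X$, the family $\{C\cap U: C\in \Cc\}$ covers $U$, and transporting through the homeomorphism I obtain a countable family $\Cc'$ of definable subsets of $V$, each of dimension strictly less than $d$, whose union equals $V$.

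Next I would show each element of $\Cc'$ is nowhere dense in $V$. Given a definable $W\subseteq V$ with $\dim W<d$, its closure $cl(W)$ in $\mathbb{R}^d$ satisfies $\dim cl(W)=\dim W<d$ (a standard o-minimal fact). Since the o-minimal dimension of a definable subset of $\mathbb{R}^d$ coincides with topological dimension, and in particular any definable subset of $\mathbb{R}^d$ with nonempty Euclidean interior has dimension $d$, we conclude that $cl(W)$ has empty interior in $\mathbb{R}^d$. Restricted to $V$, this says $W$ is nowhere dense in $V$.

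Finally, since $V$ is a nonempty open subset of $\mathbb{R}^d$, it is a locally compact Hausdorff space and hence a Baire space. The Baire category theorem then rules out $V$ being a countable union of nowhere dense sets, contradicting that $\Cc'$ covers $V$. Therefore $\cup\Cc\subsetneq X$ as required.

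The main (minor) obstacle is simply the standard identification of a top-dimensional cell of $X$ with an open subset of $\mathbb{R}^d$; everything else is a direct appeal to Baire category, which is available precisely because the ambient structure expands $(\mathbb{R},<)$ and so o-minimal dimension matches topological dimension in $\mathbb{R}^d$.
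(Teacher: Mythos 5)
Your proof is correct, but it takes a genuinely different route from the paper's. The paper also begins with cell decomposition, but then argues by induction on the ambient dimension $n$: it splits the cells of $\Cc$ into those that are graphs over the projection $\pi(X)$ and the rest, applies the induction hypothesis to the projected family to find a good fiber, and finishes in that fiber using only the fact that countably many points cannot cover an interval of $\mathbb{R}$. You instead reduce to a single top-dimensional cell, identify it with an open subset $V\subseteq\mathbb{R}^d$ via coordinate projection, observe that every definable set of dimension $<d$ is nowhere dense in $V$ (since its closure has the same dimension and hence empty interior), and invoke the Baire category theorem. Both arguments ultimately exploit the same feature of working over the reals --- the paper via uncountability of intervals, you via Baire completeness --- and your version is shorter and makes the topological input more transparent, while the paper's fiberwise induction is more elementary and self-contained. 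One small imprecision in your write-up: your parenthetical justification that a $d$-dimensional cell is homeomorphic to its base conflates the graph and strip cases (a strip over a $(d-1)$-dimensional base is homeomorphic to an open subset of $\mathbb{R}^d$, not to the base itself); the correct standard statement, which is what you actually use, is that the projection onto the coordinates indexed by the $1$'s in the cell's type maps the cell homeomorphically onto an open cell in $\mathbb{R}^d$.
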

\begin{proof}
By o-minimal cell decomposition we can assume that both $X$ and every set in $\Cc$ is a cell. We may also assume that $\dim X \geq 1$, since otherwise $\Cc$ contains only the empty set and the result is trivial. We proceed by induction on $n$, where $X\subseteq \mathbb{R}^n$. In the case $n=1$ the sets in $\Cc$ are either the empty set or singletons, and $Y$ is an interval, so the result follows. We assume that $n>1$.

Consider the projection $\pi(X)$ of $X$ to the first $n-1$ coordinates.
Let $\Cc(0) \subseteq \Cc$ we the subfamily of cells $C \in \Cc$ that are graphs of partial functions on $\pi(X)$, and let $\Cc(1)= \Cc \setminus \Cc(0)$. We first consider the case where $X$ is the graph of a function on $\pi(X)$. In this case we have that $\Cc(1)=\emptyset$ and $\dim \pi(C) = \dim C < \dim X = \dim \pi(X)$ for all $C\in \Cc$. By induction hypothesis it holds that $\cup\{ \pi(C) : C\in \Cc\} \subsetneq \pi(X)$, and consequently $\cup \Cc \subsetneq X$. 

Now suppose that $X$ is not a cell given by the graph of a function on $\pi(X)$, in particular we have that $\dim \pi(X) = \dim(X)-1$. For every $C \in \Cc(1)$ it holds that $\dim \pi(C) = \dim(C) - 1 < \dim(X) -1 = \dim \pi(X)$. Hence by induction hypothesis we may fix a point $d\in \pi(X) \setminus \cup\{ \pi(C) : C\in \Cc(1) \}$. Note that, by assumptions on $X$, the fiber $X_d$ is an interval. Moreover, for every $C\in \Cc(0)$ the fiber $C_d$ is a singleton. Since $\Cc(0)$ is countable, there exists $e \in X_d \setminus \cup \{ C_d : C\in \Cc(0)\}$, and we conclude that $(d,e) \in X \setminus \cup \Cc$.   
\end{proof}

\begin{lemma}\label{lem:count_subbasis}
Let $(X,\tau)$ be a topological space and $Y\subseteq X$ be a subset with a countable $\tau$-basis. For any $\tau$-basis $\Bb$ for $Y$ there exists a countable subfamily $\Cc\subseteq \Bb$ that is also a $\tau$-basis for $Y$. 
\end{lemma}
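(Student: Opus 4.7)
The plan is to carry out the classical Lindelöf-style argument for second-countable topological spaces, adapted to the relative setting of a $\tau$-basis for a subset $Y$.

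First, I would fix a countable $\tau$-basis $\Dd = \{D_n : n \in \mathbb{N}\}$ for $Y$, which exists by hypothesis, and use it to index candidate elements of $\Bb$ by pairs of natural numbers. Specifically, for each pair $(m,n) \in \mathbb{N}^2$ for which there exists some $B \in \Bb$ satisfying $D_n \subseteq B \subseteq D_m$, I would choose (invoking countable choice) one such $B$ and label it $B_{m,n}$. Set
\[
\Cc = \{ B_{m,n} : (m,n) \in \mathbb{N}^2 \text{ admissible}\}.
\]
By construction $\Cc \subseteq \Bb$ and $\Cc$ is countable, being indexed by a subset of $\mathbb{N}^2$.

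To show $\Cc$ is a $\tau$-basis for $Y$, fix $y \in Y$ and an open neighborhood $A$ of $y$. The key step is to apply the $\tau$-basis property three times in succession: since $\Dd$ is a $\tau$-basis for $Y$, pick $m$ with $y \in D_m \subseteq A$; since $\Bb$ is a $\tau$-basis for $Y$, pick $B \in \Bb$ with $y \in B \subseteq D_m$; since $\Dd$ is a $\tau$-basis for $Y$, pick $n$ with $y \in D_n \subseteq B$. The nesting $D_n \subseteq B \subseteq D_m$ certifies that the pair $(m,n)$ is admissible, so $B_{m,n}$ was chosen, and it satisfies $y \in D_n \subseteq B_{m,n} \subseteq D_m \subseteq A$. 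Hence $\Cc$ is a $\tau$-basis for $Y$.

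There is no real obstacle here; the argument is purely general topology and does not invoke o-minimality or any definability hypothesis. The only subtlety is that one must apply the $\tau$-basis property of $\Dd$ twice, sandwiching an element of $\Bb$ between two elements of $\Dd$, which is exactly what makes the indexing by pairs $(m,n)$ produce a subfamily that is both countable and still a $\tau$-basis.
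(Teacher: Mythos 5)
Your proposal is correct and follows essentially the same argument as the paper: both index candidate elements of $\Bb$ by pairs from the countable $\tau$-basis, choose one sandwiched element of $\Bb$ per admissible pair, and verify the $\tau$-basis property by the same three-step nesting $D_n \subseteq B \subseteq D_m \subseteq A$. No issues.
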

\begin{proof}
Let $\Bb_\omega$ be a countable $\tau$-basis for $Y$. Let $\Hh \subseteq \Bb_\omega \times \Bb_\omega$ be the set of pairs $(U, U') \in \Bb_\omega \times \Bb_\omega$ such that there exists some $A\in \Bb$ satisfying that $U\subseteq A \subseteq U'$. We fix one such set $A=A(U',U'')\in \Bb$ for every pair $(U, U') \in \Hh$. Let $\Cc = \{ A(U, U') : (U, U') \in \Hh\} \subseteq \Bb$. The family $\Cc$ is clearly countable. We show that it is a $\tau$-basis for $Y$. 

Let $x\in Y$ and consider an open set $U'\ni x$. By passing to a smaller neighborhood if necessary we may assume that $U'\in \Bb_\omega$. By definition of $\Bb$ there exists some $A\in \Bb$ satisfying that $x\in A \subseteq U'$. By definition of $\Bb_\omega$ there exists some $U\in \Bb_\omega$ such that $x\in U \subseteq A$. Hence $U \subseteq A \subseteq U'$. In particular, we have that $(U, U') \in \Hh$. Observe that $x \in A(U, U') \subseteq U'$. 
\end{proof}

The following is the easy direction of Theorem~\ref{thm:2c} below.

\begin{lemma}\label{lem:d2c-implies-2c}
Let $\Mm$ be an o-minimal expansion of $(\mathbb{R},<)$. Let $(X,\tau)$ be a definable topological space. If $(X,\tau)$ is definably second-countable then it is second-countable. 
\end{lemma}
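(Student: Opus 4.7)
The plan is to extract a countable basis from the definable witness $\{U_b : b \in B\}$ by choosing, via o-minimal cell decomposition, a countable Euclidean-dense subset of the ``full-dimensional part'' of $B$. The key structural input, beyond cell decomposition, is that $\Mm$ expands $(\mathbb{R},<)$, so Euclidean second-countability is available on parameter spaces.

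Set $n = \dim B$. Using o-minimal cell decomposition write $B = B_1 \sqcup \cdots \sqcup B_m$ as a disjoint union of cells, and set $B' = \bigsqcup_{i : \dim B_i = n} B_i$. I would first verify that the subfamily $\{U_b : b \in B'\}$ remains a basis for $\tau$ and still witnesses definable second-countability: for any $x \in X$ and $b \in B'$ with $x \in U_b$, the set $\{c \in B : x \in U_c \subseteq U_b\}$ has dimension $n$, and removing the lower-dimensional piece $B \setminus B'$ preserves this dimension, so the analogous set with $B$ replaced by $B'$ still has dimension $n = \dim B'$. Hence we may assume every cell $B_i$ has dimension $n$.

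Each such $B_i$ is in definable homeomorphism with an open subset $\tilde B_i$ of $\mathbb{R}^n$ via a coordinate projection $\pi_i$. Pulling back a countable Euclidean-dense subset of $\tilde B_i$ yields a countable $D_i \subseteq B_i$ that is Euclidean-dense in $B_i$; set $D = \bigcup_i D_i$, which is countable. I then claim that $\{U_b : b \in D\}$ is a basis for $\tau$: given $x \in X$ and an open neighborhood $A$ of $x$, pick $b \in B$ with $x \in U_b \subseteq A$ and set $C = \{c \in B : x \in U_c \subseteq U_b\}$, so $\dim C = n$. Some index $i$ satisfies $\dim(C \cap B_i) = n$, and under $\pi_i$ this corresponds to a definable full-dimensional subset of the open set $\tilde B_i \subseteq \mathbb{R}^n$. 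By a standard consequence of cell decomposition such a set has non-empty Euclidean interior, which must meet $\pi_i(D_i)$. A preimage gives a parameter $b' \in C \cap D_i \subseteq D$ with $x \in U_{b'} \subseteq U_b \subseteq A$.

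The main obstacle is the passage from the o-minimal dimension of $C \cap B_i$ to non-empty Euclidean interior: this is precisely where the hypothesis that $\Mm$ expands $(\mathbb{R},<)$ is essential, since it is the topological completeness of $\mathbb{R}$ that ensures a Euclidean-dense countable set meets every definable full-dimensional subset of an open set in $\mathbb{R}^n$. All remaining steps are routine applications of cell decomposition and the o-minimal dimension formula.
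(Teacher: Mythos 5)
Your proposal is correct and follows essentially the same route as the paper: both arguments take a countable Euclidean-dense subset $D$ of the parameter set $B$ and use the fact that the full-dimensional definable set $\{c\in B : x\in U_c\subseteq U_b\}$ has non-empty interior in $B$ (for the Euclidean topology), hence meets $D$. Your detour through cell decomposition and the coordinate projections $\pi_i$ just makes explicit the standard facts (existence of a countable Euclidean-dense subset of a definable set, and ``full dimension implies non-empty relative interior'') that the paper invokes directly.
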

\begin{proof}
We prove that every \nice{} subset of $X$ admits a countable $\tau$-basis. Let $Y\subseteq X$ be \nice{}, witnessed by a $\tau$-basis $\Bb = \{ U_b : b \in B\}$. That is, for every $y\in Y$ and $b\in B$ with $y\in U_b$, it holds that 
\begin{equation}\label{eqn:d2c-implies-2c}
\dim\{ c \in B : x \in U_c \subseteq U_b\} = \dim B. 
\end{equation}
Let $D$ be a countable subset of $B$ that is dense in $B$ in the Euclidean topology. For every $y\in Y$ and $b \in B$ with $y \in U_b$, equation~\eqref{eqn:d2c-implies-2c} implies that the set $\{ c \in B : x \in U_c \subseteq U_b\}$ has non-empty interior in $B$ in the Euclidean topology and so, by density of $D$, there exists some $c \in D$ such that $y \in U_c \subseteq U_b$. We have shown that $\{U_c : c \in D\}$ is a $\tau$-basis for $Y$. 
\end{proof}

We may now present our main result on definable second-countability. 

\begin{theorem}\label{thm:2c}
Let $\Mm$ be an o-minimal expansion of $(\mathbb{R},<)$. Let $(X,\tau)$ definable topological space. Then $(X,\tau)$ is definably second-countable if and only if it is second-countable.  
\end{theorem}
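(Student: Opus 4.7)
The plan is to prove the non-trivial direction---that second-countability implies definable second-countability---by induction on $n = \dim X$; the converse is Lemma~\ref{lem:d2c-implies-2c}. The base case $n = 0$ is immediate since $X$ is then finite, and so definably second-countable by Proposition~\ref{prop:bf}\ref{itm:prop-bf-2}.

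For the inductive step with $n \geq 1$, by Lemma~\ref{lem:nice} it suffices to cover $X$ by finitely many \nice{} subsets. Since second-countability implies separability, $(X,\tau)$ is definably separable by Theorem~\ref{thm:separability}. I would fix a definable basis $\Bb = \{U_b : b \in B\}$, split $B$ into $B_{<n}$ and $B_n$ (parameters of basic opens of dimension less than $n$, respectively equal to $n$), and set $Y = \cup\{U_b : b \in B_{<n}\}$. Applying Lemma~\ref{lem:sep-dense} to the family $\{U_b : b \in B_{<n}\}$ (with bound $n-1$) produces a definable open $Z \subseteq Y$ with $\dim Z \leq n-1$ and $Y \subseteq cl(Z)$. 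As $Z$ is $\tau$-open, the subspace $(Z,\tau|_Z)$ is second-countable of dimension less than $n$, hence definably second-countable by the inductive hypothesis; consequently $Z$ is \nice{} in $(X,\tau)$.

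It then remains to cover $X \setminus Z$. The crucial observation is that $X \setminus Y$---every point of which admits only top-dimensional basic neighborhoods---should be \nice{} with the definable $\tau$-basis $\{U_b : b \in B_n\}$. That this family is a $\tau$-basis for $X \setminus Y$ is immediate from the definition of $Y$. For the dimension condition of Definition~\ref{dfn:nice}, I would combine the countable sub-basis supplied by Lemma~\ref{lem:count_subbasis} with Lemma~\ref{lem:count_union}: for $x \in X \setminus Y$ and $b \in B_n$ with $x \in U_b$, a countable subfamily of $\{U_c : c \in B_n, x \in U_c \subseteq U_b\}$ already forms a neighborhood basis of $x$ inside $U_b$, and Lemma~\ref{lem:count_union} should preclude the full index set from having dimension strictly less than $\dim B_n$. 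A refinement of the basis in the style of the proof of Proposition~\ref{prop:bf}\ref{itm:prop-bf-2}---cell-decomposing $B_n$ and selecting sub-families definably at each point---may be needed to make this uniform.

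The main obstacle is making the last dimension-counting argument rigorous and uniform in $(x,b)$: this is where second-countability is used essentially, as the Sorgenfrey line of Example~\ref{ex:sorg} shows the conclusion fails without it. A secondary technical point is the set $Y \setminus Z \subseteq cl(Z) \setminus Z$, which is not covered by $Z \cup (X \setminus Y)$; I expect to absorb it either by iteratively enlarging $Z$ via further applications of Lemma~\ref{lem:sep-dense}, or by observing that each of its points still lies in a low-dimensional basic open to which the inductive hypothesis applies.
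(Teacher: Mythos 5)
Your overall architecture (induct on $\dim X$, split off the low-dimensional basic opens via Theorem~\ref{thm:separability} and Lemma~\ref{lem:sep-dense}, and aim for a counting contradiction via Lemmas~\ref{lem:count_subbasis} and~\ref{lem:count_union}) is reasonable, but the step you yourself flag as ``the main obstacle'' is a genuine gap, and it is where essentially all of the work of the theorem lives. The claim that $\{U_b : b\in B_n\}$ witnesses that $X\setminus Y$ is \nice{} is false for an arbitrary definable basis. Concretely, take $X=\mathbb{R}$ with the Euclidean topology, $B=\{(a,c):a<c\}$, and set $U_{(a,c)}=(a,c)\setminus\{0\}$ when $a<0<c$ and $a+c\neq 0$, and $U_{(a,c)}=(a,c)$ otherwise. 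This is a definable basis for the Euclidean topology, every basic open is one-dimensional (so $Y=\emptyset$ and $X\setminus Y=X$), and $\dim B=2$; yet for $x=0$ and any $U_b\ni 0$ the set $\{c\in B: 0\in U_c\subseteq U_b\}$ is contained in the line $\{(a,-a):a<0\}$ and has dimension $1<\dim B$. So second-countability does not force the dimension condition for the given basis, and the ``refinement of the basis'' you defer is precisely the hard content: the pointwise minimization in the proof of Proposition~\ref{prop:bf}\ref{itm:prop-bf-2} works at a single point but does not uniformize over an infinite definable set of points.

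The paper supplies exactly the missing idea: it runs a double induction on $\dim X'+\dim B'$ (dimension of the subspace plus dimension of the index set of a definable $\tau$-basis for it), isolates the definable set $F$ of ``bad'' pairs $(x,b)$ for which $\dim\{c: x\in U_c\subseteq U_b\}<\dim B$, and uses Fact~\ref{fact:fiber-max} to extract a parameter set $C$ with $\dim C<\dim B$ such that $\{U_c:c\in C\}$ is a $\tau$-basis off a definable exceptional set $Z$; the set $Z$ is then shown to satisfy $\dim Z<\dim X$ by the countable-cover contradiction (Lemmas~\ref{lem:count_subbasis} and~\ref{lem:count_union} applied to the bad pairs over $Z$), and both pieces fall to the induction. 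Your secondary gap is also real: $Y$ is open but may have dimension $n$ even though every $U_b$ with $b\in B_{<n}$ has dimension $<n$ (absent small boundaries), so neither ``iteratively enlarging $Z$'' nor ``each point lies in a low-dimensional basic open'' yields the finite cover by \nice{} sets that Lemma~\ref{lem:nice} requires. Incidentally, the paper's proof of this theorem uses neither Theorem~\ref{thm:separability} nor Lemma~\ref{lem:sep-dense}.
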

\begin{proof}
The ``only if" implication is given by Lemma~\ref{lem:d2c-implies-2c}. We prove the ``if" direction. Specifically, we prove that any definable subset of $X$ that has a countable $\tau$-basis is \nice{}. 

Let us fix a definable basis $\{U_b : b\in B\}$ for $\tau$, and definable sets $X'\subseteq X$ and $B'\subseteq B$ such that $\{ U_b : b\in B'\}$ is a $\tau$-basis for $X'$. We assume that $X'$ has a countable $\tau$-basis and prove that it is \nice{} by induction on $\dim X' + \dim B'$. If $\dim X' = 0$ then the result is given by Proposition~\ref{prop:bf}\ref{itm:prop-bf-2}, and if $\dim B'=0$ then the result is trivial, so we assume that $\dim X'>0$ and $\dim B' >0$. In particular this covers the base case of the induction. To ease notation we assume that $X' = X$ and $B'=B$.    


Let $H\subseteq X\times B \times B$ denote the relation where $(x,b,c) \in H$ whenever $x \in U_c \subseteq U_b$. 
For any $x\in X$, $b\in B$ and set $C'\subseteq B$ let $H(x,b;C')=\{ c\in C' : ( x,b,c ) \in H\}$.
For each $x\in X$, let $B(x)=\{ b\in B : 0 \leq \dim H(x,b;B) < \dim B\}$. Note that the sets $B(x)$ are definable uniformly in $x\in X$. Let $Y=\{ x\in X : B(x)\neq \emptyset\}$. 
Observe that the set $X\setminus Y$ is \nice{}, witnessed by $\{U_b : b\in B\}$. We assume that $Y\neq \emptyset$ and describe a finite partition of $Y$ into \nice{} sets, hence proving, by Lemma~\ref{lem:nice}, that $X$ is \nice{}. By passing if necessary to a set in a finite partition of $Y$, we may assume that $\dim B(x) = \dim B(y)$ for every $x, y \in Y$. The idea of the proof by induction is to find a definable subset $Z\subseteq Y$ such that $\dim Z < \dim Y$ and moreover $Y \setminus Z$ admits a definable $\tau$-basis of the form $\{U_b : b\in C \subseteq B\}$ where $\dim C < \dim B$.

Onwards for any set $F'\subseteq X \times B$ and $c\in B$ let $H(F';c)=\{( x,b ) \in F' : ( x,b,c ) \in H\}$. Let $F=\cup_{y\in Y} \{y\} \times B(y)$. 
Let $C$ be the set of all $c\in B$ such that  
$
\dim H(F;c) = \dim F. 
$ 
By Fact~\ref{fact:fiber-max}, applied to the set $H \cap (F\times C )$, we derive that 
\[
\dim C = \max_{( y,b ) \in F} \dim H(y,b;C).
\]
On the other hand by definition of $F$ every $(y, b) \in F$ satisfies that $\dim H(y,b;B)< \dim B$. It follows that $\dim C < \dim B$. 

Now let $Z\subseteq Y$ denote the set of points $z\in Y$ such that $\{ U_b : b\in C\}$ is not a $\tau$-basis for $\{z\}$. Note that the family $\{ U_b : b \in B\setminus C\}$ is a $\tau$-basis for $Z$, and $\{U_b: b\in C\}$ is a $\tau$-basis for $Y\setminus Z$. By the latter, and since $\dim C < \dim B$, we derive from the induction hypothesis that $Y\setminus Z$ is \nice{}. 

We complete the proof by showing that $\dim Z < \dim Y$. We may then apply the induction hypothesis to derive that $Z$ is also \nice{}, completing the proof of the theorem. We show that $\dim Z < \dim Y$ by assuming that $\dim Z = \dim Y$ and deriving a contradiction using the fact that $X$ has a countable $\tau$-basis.

Let $G=\cup_{z\in Z} \{z\}\times B(z) \subseteq F$. Recall that, by assumption on $Y$, it holds that $\dim B(x) = \dim B(y)$ for every $x, y \in Y$. Since by assumption $\dim Z = \dim Y$, it follows from Fact~\ref{fact:fiber} that $\dim G= \dim F$. 
Applying the definition of $C$ we reach that every $b\in B\setminus C$ satisfies that 
\begin{equation}\label{eqn:thm-2c-G}
\dim H(G;b) \leq \dim H(F;b) < \dim F = \dim G.
\end{equation}

Finally, since $\{ U_b : b \in B\setminus C\}$ is a $\tau$-basis for $Z \subseteq X$, by Lemma~\ref{lem:count_subbasis} there exists a countable set $D\subseteq B\setminus C$ such that $\{U_b : b\in D\}$ is a $\tau$-basis for $Z$. Observe that, for any $z\in Z$ and $b\in B(z)$, by definition of $\tau$-basis there must exist some $c\in D$ such that $z\in U_c \subseteq U_b$, meaning that $(z, b, c) \in H$. Hence the sets $\{ H(G;b) : b\in D\}$ cover $G$. By Lemma~\ref{lem:count_union} and equation~\eqref{eqn:thm-2c-G} we reach a contradiction. 
\end{proof}

In the following lemma we extract from the proof of Theorem~\ref{thm:2c} that the property of not being definably second-countable can be expressed with a single first-order formula. We use this later in Proposition~\ref{prop:families} to show that definable second-countability is definable in families. 

\begin{lemma}\label{lem:no-sc}
A definable topological space $(X,\tau)$ is definably second-countable if and only if there does not exists a non-empty definable set $Z\subseteq X$, two definable families of open sets $\{U_{b} : b \in B\}$ and $\{V_b : b\in B'\}$, and a definable set $G\subseteq Z\times B$, with the following three properties.
\begin{enumerate}
    \item For every $(z,b)\in G$ it holds that $z \in U_b$. 
    \item $\{V_b : b\in B'\}$ is a $\tau$-basis for $Z$. 
    \item \label{itm:not-2c-3} For every $b' \in B'$ it holds that 
    \[
    \dim \{(z,b) \in G : z \in V_{b'} \subseteq U_b\} < \dim G.
    \]
\end{enumerate}
\end{lemma}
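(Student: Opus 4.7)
I would prove the biconditional in two directions.

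\emph{Direction 1: not definably second-countable $\Rightarrow$ a configuration exists.} This is an extraction from the proof of Theorem~\ref{thm:2c}, run as an induction on $\dim Y + \dim B$ over definable non-\nice{} subsets $Y \subseteq X$ equipped with a definable $\tau$-basis $\{U_b : b \in B\}$ (initially $Y = X$ with any definable basis for $\tau$). Following that proof, set $H = \{(x,b,c) : x \in U_c \subseteq U_b\}$, $B(y) = \{b : H(y,b;B) \neq \emptyset,\ \dim H(y,b;B) < \dim B\}$, $Y_0 = \{y \in Y : B(y) \neq \emptyset\}$, $F = \bigcup_{y \in Y_0}\{y\}\times B(y)$, and $C = \{c : \dim H(F;c) = \dim F\}$; Fact~\ref{fact:fiber-max} gives $\dim C < \dim B$. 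Partition $Y_0$ finitely so that $\dim B(y)$ is constant on each piece, and by Lemma~\ref{lem:nice} pass to a non-\nice{} piece (relabelled $Y_0$). Let $Z = \{y \in Y_0 : \{U_c : c \in C\}$ is not a $\tau$-basis for $\{y\}\}$; then $\{U_{b'} : b' \in B \setminus C\}$ is automatically a $\tau$-basis for $Z$. If $\dim Z = \dim Y_0$, the configuration is $(Z, \{U_b\}, \{U_{b'} : b' \in B \setminus C\}, G)$ with $G = \bigcup_{z \in Z}\{z\}\times B(z)$: conditions 1 and 2 are immediate, and condition 3 follows from $\dim G = \dim F$ (Fact~\ref{fact:fiber}, using constancy of $\dim B(y)$) together with $\dim H(G;b') \leq \dim H(F;b') < \dim F = \dim G$ for $b' \in B \setminus C$. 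If $\dim Z < \dim Y_0$, apply the induction hypothesis to $Z$ (smaller $\dim Y$) and to $Y_0 \setminus Z$ with basis $\{U_c : c \in C\}$ (smaller $\dim B$): if neither is non-\nice{}, Lemma~\ref{lem:nice} makes $Y_0$ \nice{}, contradicting its choice.

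\emph{Direction 2: a configuration exists $\Rightarrow$ not definably second-countable.} Suppose a configuration $(Z, \{U_b\}, \{V_{b'}\}, G)$ exists and, for contradiction, let $\{W_d : d \in D\}$ witness that $X$ is definably second-countable. Form the combined family $\mathcal{C}$ of $\{V_{b'}\}$ with $\{W_d\}$ via the disjoint union construction of Section~\ref{sec:notation}: $\mathcal{C}$ is a $\tau$-basis for $Z$ and, since $\{W_d\}$ alone witnesses \nice{}ness, so does $\mathcal{C}$. Consider $\Psi = \{(z,b,b'') \in G \times B^{\mathcal{C}} : z \in U^{\mathcal{C}}_{b''} \subseteq U_b\}$. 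Applying the witness property at each $(z, b) \in G$ with $U_b$ as the definable neighborhood of $z$ gives $\dim \Psi_{(z,b)} = \dim B^{\mathcal{C}}$, so Fact~\ref{fact:fiber} yields $\dim \Psi = \dim G + \dim B^{\mathcal{C}}$. Fibers of $\Psi$ over a $V$-type $b''$ lie inside $T^{b'} = \{(z,b) \in G : z \in V_{b'} \subseteq U_b\}$, of dimension strictly less than $\dim G$ by condition 3; fibers over a $W$-type $b''$ are $S_d = \{(z,b) \in G : z \in W_d \subseteq U_b\}$. If $\dim S_d < \dim G$ for every $d$, Fact~\ref{fact:fiber} gives $\dim \Psi \leq \dim B^{\mathcal{C}} + (\dim G - 1)$, contradicting the previous equality. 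Otherwise some $d^*$ has $\dim S_{d^*} = \dim G$, and I would iterate by passing to the restricted configuration $(Z \cap W_{d^*}, \{U_b\}, \{V_{b'} : V_{b'} \subseteq W_{d^*}\}, S_{d^*})$ and reapplying the argument inside $W_{d^*}$ with the restricted witness $\{W_d : W_d \subseteq W_{d^*}\}$.

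\emph{The main obstacle} is closing the iteration in Direction~2: one must ensure that a well-founded invariant (most naturally the dimension of the second basis) strictly decreases at each stage, so the process terminates at $\dim B' = 0$. In that base case the family $\{T^{b'}\}$ is finite, equals $G$ by the $\tau$-basis property, and condition 3 forces $\dim G < \dim G$, producing the desired contradiction.
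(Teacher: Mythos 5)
Your Direction~1 is essentially the paper's ``if'' implication (the paper simply re-runs the proof of Theorem~\ref{thm:2c} up to equation~\eqref{eqn:thm-2c-G} and observes that the triple $(Z,G,\{U_b:b\in B\setminus C\})$ produced there is exactly a forbidden configuration), and that part is fine. The genuine gap is the one you flag yourself in Direction~2: the iteration does not terminate, and no choice of invariant will make it. Indeed, applying Fact~\ref{fact:fiber} to $P=\{(z,b,d): (z,b)\in G,\ z\in W_d\subseteq U_b\}$ gives $\dim P=\dim G+\dim D$ (each fiber over $(z,b)\in G$ has dimension $\dim D$ by the witness property), and since every $S_d$ has dimension at most $\dim G$, the set $\{d:\dim S_d=\dim G\}$ must itself have dimension $\dim D$. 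So the ``otherwise'' branch is the generic one, and after restricting to $W_{d^*}$ nothing has decreased: $S_{d^*}$ has the same dimension as $G$, the restricted witness index set $\{d: W_d\subseteq W_{d^*}\}$ still has dimension $\dim D$, and $\dim\{b': V_{b'}\subseteq W_{d^*}\}$ need not drop below $\dim B'$. Your one-level fibering over single witness indices $d$ simply cannot see the hypothesis~\ref{itm:not-2c-3}, because condition~\ref{itm:not-2c-3} constrains only the $V$-fibers, and the $W$-fibers are allowed to be full-dimensional.

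The paper's proof of the ``only if'' direction avoids iteration entirely by fibering over \emph{pairs} of witness indices: it sets $E=\{(z,b,c,c'): z\in A_c\subseteq A_{c'}\subseteq U_b\}$ and $D=\{(c,c'):\dim E(G;c,c')<\dim G\}$. The point of the pair is that the $\tau$-basis $\{V_{b'}\}$ can be \emph{interpolated}: given $(\mathbf z,\mathbf b)\in G$ and $c'$ with $\mathbf z\in A_{c'}\subseteq U_{\mathbf b}$, choose $b'$ with $\mathbf z\in V_{b'}\subseteq A_{c'}$ and then a full-dimensional set of $c$ with $\mathbf z\in A_c\subseteq V_{b'}$; for any such $c$ one has $E(G;c,c')\subseteq\{(z,b)\in G: z\in V_{b'}\subseteq U_b\}$, which condition~\ref{itm:not-2c-3} forces to have dimension $<\dim G$, so $(c,c')\in D$. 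This shows $\dim E(\mathbf z,\mathbf b;D)=2\dim C=\dim D\geq 0$ for every $(\mathbf z,\mathbf b)\in G$, whence Lemma~\ref{fact:fiber-max} produces a pair $(c,c')\in D$ with $\dim E(G;c,c')=\dim G$ --- a contradiction in a single step, with no countability and no recursion. If you want to salvage your argument, this two-index interpolation is the missing idea.
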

\begin{proof} For the ``if" implication, suppose that $(X,\tau)$ does not admit a construction as described in the lemma. We may then show that $(X,\tau)$ is definably second-countable by following the proof of Theorem~\ref{thm:2c} up to equation~\eqref{eqn:thm-2c-G}, and then reaching a contradiction using the fact that, by assumption, $Z$, $G$, $\{U_{b} : b \in B\}$, and $\{U_{b} : b \in B\setminus C\}$ as described in said proof cannot exist.     

We now prove the ``only if" implication. Hence let $Z \subseteq X$, $\{U_{b} : b \in B\}$, $\{V_b : b\in B'\}$, and $G\subseteq Z \times B$ be as described in the lemma. Towards a contradiction suppose that $(X,\tau)$ is definably second-countable, witnessed by a definable basis $\{ A_c : c \in C\}$ for $\tau$. 


Let us define $E \subseteq Z \times B \times C \times C$ to be the relation given by $(z,b,c,c') \in E$ whenever
$
z \in A_c \subseteq A_{c'} \subseteq U_b. 
$
For any $c,c'\in C$, let $E(G;c,c')=\{ (z,b) \in G : (z,b,c,c')\in E\}$. Let $D = \{ (c,c') \in C \times C : \dim E(G;c,c') < \dim G\}$. 
Furthermore, for each $(z,b)\in G$ let $E(z,b;D)=\{ (c,c')\in D : (z,b,c,c')\in E\}$. We prove that, for every $(z,b)\in G$, it holds that $0\leq \dim E(z,b;D) = \dim D$. By Fact~\ref{fact:fiber-max} it then follows that there exists some $(c,c')\in D$ such that $\dim E(G;c,c') = \dim G$, which contradicts the definition of $D$. 

Hence let us fix a pair $(\mathbf{z},\mathbf{b})\in G$. Let $C' = \{ c' \in C : \mathbf{z} \in A_{c'} \subseteq U_{\mathbf{b}}\}$. Observe that, since $\{ A_c : c \in C\}$ witnesses that $(X,\tau)$ is definably second-countable (see Remark~\ref{rem:nice}), it holds that $\dim C' = \dim C$. 

\begin{claim}\label{claim:no-2c}
For each $c'\in C'$, it holds that $\dim \{c \in C : (c,c')\in E(\mathbf{z},\mathbf{b};D)\} = \dim C$. 
\end{claim}
\begin{claimproof}
Let us fix $c'\in C'$. Since $\{ V_{b'} : b'\in B'\}$ is a $\tau$-basis for $Z$, there exists some $b'\in B'$ such that $\mathbf{z} \in V_{b'} \subseteq A_{c'}$. Let $C''=\{ c \in C : \mathbf{z} \in A_c \subseteq V_{b'}\}$. Since $\{ A_c : c \in C\}$ witnesses that $(X,\tau)$ is definably second-countable (see Remark~\ref{rem:nice}), note that $\dim C'' =\dim C$. We show that $C''\times \{c'\} \subseteq E(\mathbf{z},\mathbf{b};D)$.

For any $c \in C''$, since $A_c \subseteq V_{b'} \subseteq A_{c'}$, observe that $E(G;c,c')\subseteq \{ (z,b) \in G : z \in V_{b'}\subseteq U_b\}$. Condition~\ref{itm:not-2c-3} in the lemma states that the latter set has dimension less than $\dim G$, and so we derive that $(c,c')\in D$. 
We have shown that
$
C'' \subseteq  \{c \in C : (c,c')\in E(\mathbf{z},\mathbf{b};D)\} \subseteq C.
$
Since $\dim C'' = \dim C$, the claim follows.  
\end{claimproof}

Using the fact that $\dim C' = \dim C$ and Claim~\ref{claim:no-2c}, together with Fact~\ref{fact:fiber}, we derive that 
\[
2\dim C =\dim \bigcup_{c' \in C'} \{c \in C : (c,c')\in E(\mathbf{z},\mathbf{b};D)\} \times \{c'\} \leq \dim E(\mathbf{z},\mathbf{b};D). 
\]
Since $E(\mathbf{z},\mathbf{b};D)\subseteq D \subseteq C \times C$, we conclude that $\dim E(\mathbf{z},\mathbf{b};D) = \dim D = 2\dim C \geq 0$. 
\end{proof}


A family of topological spaces $\{ (X_c,\tau_c) : c \in C\}$ is (uniformly) definable if there exists a partitioned formula $\varphi(x,y,z)$ and a definable set $B^{\text{opp}}\subseteq M^{|y|+|z|}$ with the following properties. The set $C$ is the projection of $B^{\text{opp}}$ to the last $|z|$ coordinates and, for every $c \in C$, the family of sets $\{ \varphi(M,b,c) : b \in B_c\}$, where $B_c=\{ b \in M^{|y|} : (b,c) \in B^{\text{opp}}\}$ and $\varphi(M,b,c)=\{ a \in M^{|x|} : \Mm \models \varphi(a,b,c)\}$, is a basis for the topology $\tau_c$. An example of a definable family of topological spaces would be given by any definable family of subsets of a given definable topological space, with the subspace topology.   

We show that the properties of definable separability and definable second-countability are definable in families. 


\begin{proposition}\label{prop:families}
Let $\Mm$ be an o-minimal structure. Let $\Cc=\{ (X_c,\tau_c) : c \in C\} $ be a definable family of topological spaces. There exists definable subsets $C_{\text{sep}}$ and $C_{\text{sc}}$ of $C$ such that, for every $c \in C$, the space $(X_c,\tau_c)$ is 
\begin{enumerate}[(1)]
\item \label{itm:fam-1} definably separable if and only if $c \in C_{\text{sep}}$, 
\item \label{itm:fam-2} definably second-countable if and only if $c \in C_{\text{sc}}$. 
\end{enumerate}
Moreover, $C_{\text{sep}}$ and $C_{\text{sc}}$ can be chosen definable over the same parameters as the family $\Cc$.
\end{proposition}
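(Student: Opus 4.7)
The plan is to derive~(1) from Lemma~\ref{lem:sep-X(n)} combined with the explicit construction in the proof of Lemma~\ref{lem:sep-dense}, and to derive~(2) from Lemma~\ref{lem:no-sc} combined with the canonical construction extracted from the proof of Theorem~\ref{thm:2c}. In both cases the key point is that the relevant witnesses can be built uniformly in the family parameter $c$ from the given uniform basis of $\tau_c$, yielding a first-order condition on $c$.

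For part~(1), fix the partitioned formula $\varphi(x,y,z)$ with $\{\varphi(M,b,c) : b \in B_c\}$ a basis for $\tau_c$, and let $N = |x|$ uniformly bound $\dim X_c$. For each $n \in \{0,\ldots,N\}$ let $\Bb_c(n)$ denote the subfamily of basic opens of dimension at most $n$, and set $X_c(n) = \cup \Bb_c(n)$; both are uniformly definable in $c$, since o-minimal dimension is uniformly definable in families. Examining the proof of Lemma~\ref{lem:sep-dense}, the witness $Z$ is built canonically from the starting family: by formula~\eqref{eqn:sep-dense} $Z = \Ainf \cup \bigcup\{int(A'_b) : b \in B\}$ in the base case $k=1$, and by a fixed finite recursion on the number of coordinates in general. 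Applying this construction to $\Bb_c(n)$ yields a set $Z_c(n)$ uniformly definable in $c$. By Lemma~\ref{lem:sep-X(n)} combined with Lemma~\ref{lem:sep-dense}, $(X_c, \tau_c)$ is definably separable if and only if, for every $n \in \{0,\ldots,N\}$, the set $Z_c(n)$ is open in $\tau_c$, contained in $X_c(n)$, has dimension at most $n$, and satisfies $X_c(n) \subseteq cl(Z_c(n))$. This is a first-order condition in $c$, defining $C_{\text{sep}}$.

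For part~(2), I would appeal to Lemma~\ref{lem:no-sc}, which characterises failure of definable second-countability as the existence of a definable quadruple $(Z, \{U_b\}_{b \in B}, \{V_b\}_{b \in B'}, G)$ satisfying three dimension-theoretic conditions. The difficulty is that this existential, taken a priori over arbitrary definable objects, is not obviously first-order in $c$. To resolve this, I would note that the ``if'' direction of Lemma~\ref{lem:no-sc} is proved by running the construction in the proof of Theorem~\ref{thm:2c} up to equation~\eqref{eqn:thm-2c-G}---a part of that argument which does not invoke the $(\mathbb{R},<)$ hypothesis---and that this construction is entirely canonical in the starting basis. Starting from $\{\varphi(M,b,c) : b \in B_c\}$, one thus defines the objects $H_c, Y_c, F_c, C_c, Z_c, G_c$ at each level exactly as in that proof, uniformly in $c$. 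Since the recursion is on $\dim X' + \dim B' \leq |x|+|y|$, its depth is uniformly bounded and can be unrolled into a finite sequence of uniformly definable layers. At each layer one tests first-order whether the canonical objects form a counterexample in the sense of Lemma~\ref{lem:no-sc}; the disjunction across layers gives a first-order formula for failure of definable second-countability, whose negation defines $C_{\text{sc}}$.

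The main obstacle is in part~(2): one must verify that every counterexample in the sense of Lemma~\ref{lem:no-sc} corresponds to a failure of the canonical recursion at some level. The reduction step at a given level succeeds precisely when either $Z$ is empty or $\dim Z < \dim Y$; its failure produces, via the argument of Lemma~\ref{lem:no-sc}, a counterexample in the required sense. Together with the uniform boundedness of the recursion depth, this translates the a priori unbounded existential in Lemma~\ref{lem:no-sc} into a uniformly bounded first-order statement.
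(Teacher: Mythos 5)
Your argument is correct, but it is not the proof the paper gives. The paper disposes of both parts with a compactness argument: by Definition~\ref{dfn:separable} (via Lemma~\ref{lem:inf-family} and uniform finiteness) the set $C\setminus C_{\text{sep}}$ is a union, over candidate formulas, of sets definable over the parameters of $\Cc$, and by Lemma~\ref{lem:sep-X(n)} so is $C_{\text{sep}}$ itself; hence both $C_{\text{sep}}$ and its complement are type-definable over a fixed small parameter set, and compactness forces them to be definable. The same runs for $C_{\text{sc}}$ using Definition~\ref{dfn:2nd-count} on one side and Lemma~\ref{lem:no-sc} on the other. What you do instead --- making the witnesses canonical and uniform in $c$, namely the set $Z$ of equation~\eqref{eqn:sep-dense} for separability and the bounded-depth recursion of Theorem~\ref{thm:2c} for second-countability --- is precisely the alternative proof the paper sketches in Remark~\ref{rem:Z}; its payoff is an explicit defining formula rather than a bare existence statement, at the cost of having to verify uniformity of the constructions. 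Your argument does go through, with two clarifications. For part~(2) the equivalence you need is that the recursion succeeds at every node if and only if $(X_c,\tau_c)$ is definably second-countable: success everywhere gives a finite cover by \nice{} sets (Lemma~\ref{lem:nice}), while failure at a node (i.e.\ $\dim Z=\dim Y$ there) yields, via equation~\eqref{eqn:thm-2c-G}, a counterexample quadruple to which the ``only if'' direction of Lemma~\ref{lem:no-sc} applies; you do not need your final claim that every abstract counterexample is detected by the canonical recursion --- that follows a posteriori but is not what the argument requires. You should also record that the branching of the recursion (the finite partition of $Y$ by the value of $\dim B(x)$, and the split of each piece into $Z$ and $Y\setminus Z$) is uniformly bounded as well as its depth, so the tree really does unroll into a single first-order formula.
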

\begin{proof}

By Lemma~\ref{lem:sep-X(n)} and Definition~\ref{dfn:2nd-count} respectively, note that the properties of being definably separable and definably second-countable are maintained after passing to an elementary extension or substructure, and consequently we may assume that $\Mm$ is saturated. 

Let $C_{\text{sep}}$ denote the set of elements $c\in C$ such that the space $(X_c,\tau_c)$ is definably separable.
Definition~\ref{dfn:separable} and Lemma~\ref{lem:inf-family} show that a definable topological space is definably separable if and only if there does not exist an infinite definable family $\{A_t : t \in I\}$ of open subsets of $X$ such that $I\subseteq M$ is an interval and furthermore $A_t \cap A_{s}=\emptyset$ for every distinct pair $t, s \in I$. For any $c\in C$, let us say that a formula $\varphi(x,y,z)$, with $|y|=1$, \emph{witnesses that  $(X_c,\tau_c)$ is not definably separable}, if there exist some parameters $d\in M^{|z|}$, and some interval $I\subseteq M$, such that the family of sets $A_t=\{ a \in M^{|x|} : \Mm\models\varphi(a,t,d) \}$, for $t\in I$, has the properties described above. For every formula $\varphi(x,y,z)$ with $|y|=1$, observe that the set of all $c\in C$ such that $\varphi(x,y,z)$ witnesses that $(X_c,\tau_c)$ is not definably separable is definable over the same parameters as $\Cc$. Consequently, $C_{\text{sep}}$ is an intersection of sets definable over the same parameters as $\Cc$. Similarly, using Lemma~\ref{lem:sep-X(n)}, observe that $C \setminus C_{\text{sep}}$ is also an intersection of sets definable over the same parameters as $\Cc$. By a standard saturation argument we derive that $C_{\text{sep}}$ is definable over the same parameters as $\Cc$. 

The proof that $C_{\text{sc}}$, the set of all $c\in C$ such that $(X_c,\tau_c)$ is definably second-countable, is definable, is analogous, using Definition~\ref{dfn:2nd-count} and Lemma~\ref{lem:no-sc}.
\end{proof}

\begin{remark}\label{rem:Z}
Let $\Mm$ be an o-minimal structure and $(X,\tau)$ be a definable topological space. Let $\Aa=\{ A_b : b\in B\}$ be a definable family of open subsets of $X$ of dimension at most $n$. Observe that the proof of Lemma~\ref{lem:sep-dense} provides an explicit construction for a definable set $Z \subseteq \cup \Aa$, and shows that $Z$ is open in $X$ and dense in $\cup\Aa$. Furthermore, it applies the assumption that $(X,\tau)$ is definably separable to derive that $\dim Z \leq n$. Specifically, if $B\subseteq M$ then $Z$ is simply given by Equation~\eqref{eqn:sep-dense}, and in the general case the same idea is applied recursively. Note that the construction of $Z$ depends uniformly on the family $\Aa$. 

It follows that in Lemma~\ref{lem:sep-X(n)} one may fix the sets $Z(n)$, for every $n\leq \dim X$, to be the unique sets described in the proof of Lemma~\ref{lem:sep-dense} (for $\Aa = \Bb(n)$), and then $(X,\tau)$ is definably separable \textbf{if and only if} $\dim Z(n) \leq n$ for every $n < \dim X$. Furthermore, if $\{(X_c,\tau_c) : c \in C\}$ is a definable family of topological spaces, then, for each $n$, the corresponding sets $Z_c(n) \subseteq \Bb_c(n)$ are definable uniformly in $c \in C$. This can be used to yield a new proof of Proposition~\ref{prop:families}\ref{itm:fam-1}, which provides an explicit description of a formula defining the set $C_{\text{sep}}$.


A similar approach, using the proof of Theorem~\ref{thm:2c} and Lemma~\ref{lem:no-sc}, can be used to give an explicit description of a formula defining the set $C_{\text{sc}}$ in Proposition~\ref{prop:families}.
\end{remark}

\section{Definable metric spaces}\label{sec:metric}

In this section we explore definable separability and second-countability in the context of o-minimal definable metric spaces. Definable metric spaces were introduced and studied by Walsberg in~\cite{walsberg-thesis}. Although Walsberg works under the assumption that $\Mm$ is an o-minimal expansion of an ordered field, any notion that we borrow from~\cite{walsberg-thesis}, including the definition of definable metric space below, still makes sense in the ordered group setting. 

\begin{definition}\label{dfn:dms}

Let $\Mm=(M,0,+,<,\ldots)$ be an expansion of an ordered group. 
A \emph{definable metric space} is a tuple $(X,d)$, where $X$ is a definable set and \mbox{$d:X\times X\rightarrow M^{\geq 0}$} is a definable map that satisfies the metric axioms, namely identity of indiscernibles, symmetry and subadditivity. 
\end{definition}

Given a definable metric space $(X,d)$, $x \in X$, and $t \in M^{>0}$, we denote the open ball of center $x$ and radius $t$ by $B_d(x,t)=\{y\in X : d(x,y) < t\}$.
A definable metric space $(X,d)$ is a definable topological space with the topology generated by open balls. We denote this topology by $\tau_d$.

In~\cite{walsberg-thesis} Walsberg defined \emph{definable separability} among o-minimal definable metric spaces to be the property of not containing an infinite definable discrete subspace. His main result \cite[Theorem 9.0.1]{walsberg-thesis} implies that a metric space definable in an o-minimal expansion of the field of reals is definably separable if and only if it is separable. On the other hand, Walsberg's definition of definable separability is not suitable for general o-minimal (non-definably-metrizable) definable topological spaces, since these include for example the Moore plane \cite[Example A.12]{andujar_thesis}, which is definable in $(\mathbb{R},+,\cdot,<)$ and separable but nevertheless the subspace $\mathbb{R}\times \{0\}$ is discrete. Another example, this time one-dimensional, is given by \cite[Example A.9]{andujar_thesis}.

We describe the precise relationship between Walsberg's definition of definable separability (generalized to all definable topological spaces) and ours by means of the next definition, lemma, and proposition.

\begin{definition}\label{dfn:hereditary_separabiltiy}
A definable topological space $(X,\tau)$ is \emph{hereditarily definably separable} if every definable subspace of $(X,\tau)$ is definably separable (in the sense of Definition~\ref{dfn:separable}). 
\end{definition}

The next lemma states that, as long as $\Mm$ has definable choice, Walsberg's definition of definable separability is equivalent, in our terminology, to being hereditarily definably separable. 

\begin{lemma}\label{lemma:Walsberg_separability}
Suppose that $\Mm$ has definable choice. A definable topological space $(X,\tau)$ is hereditarily definably separable if and only if it does not contain an infinite definable discrete subspace. 
\end{lemma}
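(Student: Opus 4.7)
The plan is to prove the two directions directly from the definitions; definable choice will enter only in the backward direction.

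For the forward direction I would argue by contrapositive. If $Y\subseteq X$ is an infinite definable discrete subspace, then every singleton of $Y$ is open in $(Y,\tau|_Y)$, so $\{\{y\}:y\in Y\}$ is an infinite definable family of pairwise disjoint open subsets of $(Y,\tau|_Y)$. This directly witnesses that $(Y,\tau|_Y)$ fails to be definably separable, hence $(X,\tau)$ is not hereditarily definably separable.

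For the backward direction I would also argue contrapositively, assuming that $(X,\tau)$ admits a definable subspace $(Y,\tau|_Y)$ that is not definably separable and constructing an infinite definable discrete subspace of $X$. Let $\Aa=\{A_b:b\in B\}$ be an infinite definable family of pairwise disjoint non-empty open subsets of $(Y,\tau|_Y)$. Applying definable choice to the definable relation $\{(b,y)\in B\times Y:y\in A_b\}$ yields a definable map $f:B\to Y$ with $f(b)\in A_b$ for every $b\in B$. Pairwise disjointness of $\Aa$ forces $f$ to be injective, so $Z:=f(B)$ is a definable subset of $X$ in definable bijection with $B$, and therefore infinite.

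It remains to verify that $Z$ is discrete. For each $z=f(b)\in Z$, pairwise disjointness gives $A_b\cap Z=\{z\}$, which is the trace on $Z$ of an open set of $(Y,\tau|_Y)$; hence $\{z\}$ is open in $(Z,\tau|_Z)$. Thus $Z$ is an infinite definable discrete subspace of $X$, contradicting the hypothesis. The only nontrivial ingredient is the use of definable choice to produce the selector $f$; without it one cannot in general extract a definable transversal from an infinite definable family of pairwise disjoint open sets, which is precisely why the hypothesis on $\Mm$ is needed.
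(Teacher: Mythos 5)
Your proof is correct and follows essentially the same route as the paper: the forward direction is the trivial observation about singletons, and the backward direction uses definable choice to select a point from each set of an infinite definable pairwise disjoint family of open sets, the image being the desired infinite discrete subspace. The only cosmetic difference is that the paper additionally normalizes the selector so that $f(b)=f(c)$ whenever $A_b=A_c$, which handles a non-injectively indexed family, whereas you sidestep this by taking the sets non-empty and reading pairwise disjointness at the level of indices (so that injectivity of $f$ and $A_b\cap Z=\{f(b)\}$ follow immediately).
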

\begin{proof}
Any infinite discrete space is clearly not definably separable and so the ``only if" implication follows. 

For the ``if" implication, let $Y\subseteq X$ be a definable subset such that the subspace $(Y,\tau|_Y)$ is not definably separable. Let $\Aa=\{A_b : b \in B\}$ be an infinite definable family of pairwise disjoint open sets in $(Y,\tau|_Y)$. Using definable choice let $f:B\rightarrow \cup\Aa$ be a definable map such that $f(b)\in A_b$ for every $b\in B$, and satisfying moreover that $f(b)=f(c)$ whenever $A_b=A_c$. Then the image subspace $(f(B),\tau|_{f(B)})$ is an infinite definable discrete subspace.   
\end{proof}

Observe that, by Propositions~\ref{prop:bf}\ref{itm:prop-bf-1} and~\ref{prop:2c-implies-sep}, any definably second-countable definable topological space in an o-minimal structure is hereditarily definably separable.

Recall the fact from general topology that any separable metric space is hereditarily separable. We show that the same holds in the definable setting whenever the underlying structure has definable choice. 

\begin{proposition}\label{remark_def_sep}
Let $\Mm$ be an expansion of an ordered group with definable choice (e.g. $\Mm$ is an o-minimal expansion of an ordered group). Let $(X,d)$ be a definable metric space. The following are equivalent.
\begin{enumerate}[(1)]
\item \label{itm:def_sep_1} $(X,d)$ is definably separable.
\item \label{itm:def_sep_2} $(X,d)$ is hereditarily definably separable.
\item \label{itm:def_sep_3} $(X,d)$ does not contain an infinite definable discrete subspace (namely it is definably separable in the sense of Walsberg~\cite{walsberg-thesis}).
\end{enumerate}
\end{proposition}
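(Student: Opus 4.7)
The plan is to combine Lemma~\ref{lemma:Walsberg_separability} with a direct metric-space construction. The equivalence (2)$\Leftrightarrow$(3) is exactly the content of Lemma~\ref{lemma:Walsberg_separability}, and the implication (2)$\Rightarrow$(1) is immediate since $X$ is itself a definable subspace of $(X,d)$. So the new content lies in (1)$\Rightarrow$(2), which I would prove in the contrapositive form (1)$\Rightarrow$(3): starting from an infinite definable discrete subspace $Y \subseteq X$, I would manufacture an infinite definable family of pairwise disjoint open subsets of $X$, contradicting definable separability.

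The construction goes as follows. For every $y \in Y$, discreteness of $Y$ together with the fact that $d$-balls form a basis for $\tau$ yields some $r > 0$ such that $B_d(y, r) \cap Y = \{y\}$. Using definable choice I would fix a definable function $r \colon Y \to M^{>0}$ selecting such a witness for each $y$. The key inequality is that whenever $y_1, y_2 \in Y$ are distinct,
\[
d(y_1, y_2) \geq \max\bigl(r(y_1), r(y_2)\bigr),
\]
because $y_2 \in Y \setminus \{y_1\}$ lies outside $B_d(y_1, r(y_1))$, and symmetrically. The standard half-radius argument then shows that the definable family $\{B_d(y, r(y)/2) : y \in Y\}$ is pairwise disjoint: any $z$ in the intersection of two such balls would satisfy
\[
d(y_1, y_2) \leq d(y_1,z) + d(z, y_2) < \tfrac{r(y_1) + r(y_2)}{2} \leq \max\bigl(r(y_1), r(y_2)\bigr),
\]
contradicting the previous display. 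Each ball contains its center and is open, so the family consists of pairwise disjoint non-empty open sets, distinct as $y$ varies, and hence is infinite, giving the desired witness to the failure of definable separability.

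The main technical point worth flagging is the implicit use of $2$-divisibility of the ordered group in forming $r(y)/2$. This is automatic in o-minimal ordered groups (which are divisible abelian), the intended setting of the proposition. In full generality one would replace $r(y)/2$ by a definable $s(y) > 0$ with $2 s(y) \leq r(y)$; if no such $s(y)$ exists, as can happen when the order is discrete, then $B_d(y, r(y))$ is itself the singleton $\{y\}$ and the family $\{B_d(y, r(y)) : y \in Y\}$ already provides the desired collection of pairwise disjoint open singletons. Apart from this, the proof is purely formal; definable choice enters twice, once to select the radius function $r$ and once, via Lemma~\ref{lemma:Walsberg_separability}, to translate failure of hereditary definable separability into the existence of an infinite definable discrete subspace.
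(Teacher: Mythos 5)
Your proposal is correct and follows essentially the same route as the paper: the equivalence of (2) and (3) via Lemma~\ref{lemma:Walsberg_separability}, the trivial implication (2)$\Rightarrow$(1), and a contrapositive proof of (1)$\Rightarrow$(3) that uses definable choice to select radii isolating the points of the discrete set and then a triangle-inequality/half-radius argument to produce an infinite definable family of pairwise disjoint balls. The only cosmetic difference is that the paper chooses $\varepsilon_x$ with $2\varepsilon_x < d(x,y)$ outright (disposing of the discrete-group case by a dichotomy at the start), whereas you halve a chosen isolating radius and handle non-divisibility as a fallback; both treatments are sound.
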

\begin{proof}
The equivalence \ref{itm:def_sep_2}$\Leftrightarrow$\ref{itm:def_sep_3} is given by Lemma~\ref{lemma:Walsberg_separability}. The implication \ref{itm:def_sep_2}$\Rightarrow$\ref{itm:def_sep_1} is trivial. We prove \ref{itm:def_sep_1}$\Rightarrow$\ref{itm:def_sep_3} by contraposition.

If the ordered group structure on $\Mm$ is discrete then every definable metric space is discrete, and so definably separable if and only if finite. So we may assume that the ordered group is dense, i.e. $M^{>0}$ does not have a minimum. In particular, since for every $0<s<t$ it holds that either $2s\leq t$ or $2(t-s)\leq t$, observe that, for every $t>0$, there exists $s>0$ with $2s<t$. 

Let $(X,d)$ be a definable metric space and $Y$ be an infinite definable discrete subspace. By definable choice one may choose definably, for each $x\in Y$, some $\varepsilon_x>0$ such that $2\varepsilon_x < d(x,y)$ for every $y \in Y \setminus \{x\}$. We prove that the infinite definable family of open balls $\{B_d(x,\varepsilon_x) : x\in Y\}$ is pairwise disjoint, and so $(X,d)$ is not definably separable. 

Towards a contradiction suppose that there exists $x,y\in Y$ and some $z\in B_d(x,\varepsilon_x)\cap B_d(y,\varepsilon_y)$. Then by the triangle inequality 
$
d(x,y)\leq d(x,z)+d(z,y)\leq \varepsilon_x + \varepsilon_y \leq 2 \max\{\varepsilon_x, \varepsilon_y\}. 
$
Without loss of generality suppose that $\varepsilon_x=\max\{\varepsilon_x, \varepsilon_y\}$. Then this contradicts the fact that $2\varepsilon_x < d(x,y)$. 
\end{proof}

Since definable metric spaces are, by definition, defined only in expansions of ordered groups, Proposition~\ref{remark_def_sep} addresses all definable metric spaces in o-minimal structures. 

Onwards we restrict our scope exclusively to the o-minimal setting. Recall that in general topology a metric space is separable if and only if it is second-countable (if and only if it is hereditarily separable). We prove that this equivalence also holds in the o-minimal definable context. Joining this equivalence with the one in Proposition~\ref{remark_def_sep} we reach the main result of this section.

\begin{theorem}\label{thm:sep-metric-spaces}
Let $\Mm$ be an o-minimal expansion of an ordered group. Let $(X,d)$ be a definable metric space. The following are equivalent.
\begin{enumerate}[(1)]
\item \label{itm:thm-msp_1} $(X,d)$ is definably separable.

\item \label{itm:thm-msp_3} $(X,d)$ is hereditarily definably separable. 

\item \label{itm:thm-msp_2} $(X,d)$ does not contain an infinite definable discrete subspace (namely it is definably separable in the sense of Walsberg~\cite{walsberg-thesis}). 

\item \label{itm:thm-msp_4} $(X,d)$ is definably second-countable. 
\end{enumerate}
\end{theorem}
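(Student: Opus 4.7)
The equivalences (1) $\Leftrightarrow$ (2) $\Leftrightarrow$ (3) are already established by Proposition~\ref{remark_def_sep}, and the implication (4) $\Rightarrow$ (1) is immediate from Proposition~\ref{prop:2c-implies-sep}; hence the remaining task is to prove (2) $\Rightarrow$ (4). I plan to proceed by induction on $n = \dim X$, with the base case $n = 0$ given by Proposition~\ref{prop:bf}\ref{itm:prop-bf-2}. For $n > 0$, I assume the statement for all definable metric spaces of strictly smaller dimension and that $(X,d)$ is hereditarily definably separable.

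The strategy is to partition $X$ into two definable pieces, each of which I will show to be \nice{} as a subset of $X$, so that Lemma~\ref{lem:nice} yields that $(X,d)$ is definably second-countable. The natural choice is
\[
X^{*} := \{y \in X : \dim B_d(y,\varepsilon) = n \text{ for all sufficiently small } \varepsilon > 0\}
\]
and its complement. Definability of $X^{*}$ follows from Fact~\ref{fact:fiber} together with the piecewise-constancy (in $\varepsilon$) of $\dim B_d(y,\varepsilon)$, and openness follows from a direct triangle-inequality argument.

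I would show $X^{*}$ is \nice{} using the all-balls basis $\{B_d(x,t) : (x,t) \in X \times M^{>0}\}$ of dimension $n+1$: for $y \in X^{*}$ and any ball $B_d(x_0, t_0) \ni y$, the set of $(x,t)$ with $y \in B_d(x,t) \subseteq B_d(x_0, t_0)$ contains, by the triangle inequality, all pairs satisfying $d(x,y) + d(x,x_0) < t_0$ and $d(x,y) < t \leq t_0 - d(x,x_0)$; its projection to the first coordinate is an open neighborhood of $y$, which by $y \in X^{*}$ has dimension $n$, so by Fact~\ref{fact:fiber} the total set has dimension $n+1$ as required. For the complement, I would first aim to bound $\dim(X \setminus X^{*}) < n$: by definable choice select $\varepsilon(y) > 0$ uniformly in $y \in X \setminus X^{*}$ with $\dim B_d(y, \varepsilon(y)) < n$, apply Lemma~\ref{lem:sep-dense} to the definable family $\{B_d(y, \varepsilon(y)) : y \in X \setminus X^{*}\}$ to obtain a definable open $Z$ with $\dim Z < n$ and $X \setminus X^{*} \subseteq \mathrm{cl}(Z)$ in the metric topology, and then invoke a small-boundary property to conclude $\dim(X \setminus X^{*}) < n$. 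The metric subspace $(X \setminus X^{*}, d|_{X \setminus X^{*}})$ is hereditarily definably separable (inherited by definable subspaces), so by the inductive hypothesis it is definably second-countable; using ambient balls to lift a witnessing subspace basis shows $X \setminus X^{*}$ is \nice{} as a subset of $X$. Applying Lemma~\ref{lem:nice} then completes the induction.

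The main obstacle is the dimension bound on $X \setminus X^{*}$: Lemma~\ref{lem:sep-dense} only yields density of $Z$ in $X \setminus X^{*}$, not a direct dimension bound. This gap would have to be closed either by establishing a small-boundary property for the definable metric topology (perhaps via structural results for o-minimal definable metric spaces in the style of~\cite{walsberg-thesis}) or by a direct argument that exploits an $n$-dimensional $X \setminus X^{*}$ to construct an infinite definable family of pairwise disjoint open sets, contradicting definable separability. A secondary technical point concerns the translation between a basis witnessing definable second-countability of the subspace $(X \setminus X^{*}, d|)$ and a $\tau$-basis for $X \setminus X^{*}$ in the ambient topology; this should be handled using that the subspace topology on a metric subspace is induced by restriction of ambient balls, combined with radius control for fitting ambient balls inside prescribed ambient open neighborhoods.
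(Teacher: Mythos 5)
Your reductions are correct: (1)$\Leftrightarrow$(2)$\Leftrightarrow$(3) is Proposition~\ref{remark_def_sep}, (4)$\Rightarrow$(1) is Proposition~\ref{prop:2c-implies-sep}, and the remaining implication is the content of the paper's Lemma~\ref{lem:sep-implies-2c}. Your treatment of the locally $n$-dimensional part $X^{*}$ via the all-balls basis is essentially sound. But the gap you flag is genuine, and neither of your proposed repairs closes it under the stated hypotheses. The small-boundary property for definable metric spaces is only known over o-minimal expansions of ordered \emph{fields} (Walsberg's Lemma 9.2.11), whereas the theorem is stated over ordered \emph{groups}; the paper explicitly remarks that no proof over groups is known. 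So you cannot pass from $X\setminus X^{*}\subseteq cl(Z)$ with $\dim Z<n$ (which Lemma~\ref{lem:sep-dense} does give you) to $\dim(X\setminus X^{*})<n$, and no direct disjoint-open-family argument for this bound is supplied. There is a second, related gap: even granting the dimension bound, promoting definable second-countability of the metric subspace $X\setminus X^{*}$ to \nice{}ness of $X\setminus X^{*}$ inside $X$ is precisely the failure mode that motivates the notion of \nice{} (a space partitioned into two definably second-countable subspaces need not be definably second-countable), and your sketched lift via ambient balls needs $\dim\bigl((X\setminus X^{*})\cap B_d(w,\delta)\bigr)=\dim(X\setminus X^{*})$ at the relevant points, i.e.\ the same local full-dimensionality issue reappears one level down.

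The paper avoids both problems by changing the induction variable: it proves that every definable $X'\subseteq X$ with $X'\subseteq cl(Y)$ for some definable $Y\subseteq X$ is \nice{}, by induction on $\dim Y$ rather than on $\dim X'$. With $n=\dim Y$ and $Y'$ the union of the basic $\tau|_Y$-open subsets of $Y$ of dimension at most $n-1$, it splits $X$ into $X_0=X\cap cl(Y\setminus Y')$ and $X_1=X\setminus X_0$. The piece $X_0$ is handled directly using the basis of balls centred at points of $Y$ (the identity $\dim(Y\cap B_d(x,t/3))=\dim Y$ for $x\in X_0$, via Lemma~\ref{lem:easy-basis} and Fact~\ref{fact:fiber}, plays the role of your local full-dimensionality, but relative to the dense set $Y$ rather than to $X$). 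The piece $X_1$ lies in $cl(Z)$ for the set $Z$ of dimension at most $n-1$ produced by Lemma~\ref{lem:sep-dense}, so the induction hypothesis applies to $X_1$ with $Z$ in place of $Y$; no dimension bound on $X_1$ itself is ever needed, and \nice{}ness is established directly in the ambient space, so no subspace-to-ambient lift is required. You should either restructure your induction along these lines or restrict your argument to o-minimal expansions of ordered fields, where Walsberg's small-boundary lemma is available.
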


The equivalence between~\ref{itm:thm-msp_1}, \ref{itm:thm-msp_3} and \ref{itm:thm-msp_2} in Theorem~\ref{thm:sep-metric-spaces} is provided by Proposition~\ref{remark_def_sep}. Furthermore, the implication $\ref{itm:thm-msp_4} \Rightarrow\ref{itm:thm-msp_1}$ is given by Lemma~\ref{prop:2c-implies-sep}. Hence it remains to show that definable separability implies definable second-countability. We prove this in Lemma~\ref{lem:sep-implies-2c} below. 

In proving Lemma~\ref{lem:sep-implies-2c} we will use the next simple ad hoc lemma. It follows from an easy application of the triangle inequality, and so we leave the proof to the reader. Note that it does not necessitate that the metric map $d$ be definable.

\begin{lemma}\label{lem:easy-basis}
Let $\Mm$ be an expansion of an ordered divisible group and $(X,d)$ be a definable metric space. Let $X', Y\subseteq X$ be subsets with $X'\subseteq cl(Y)$. Then $\{ B_d(y,t) : y\in Y,\, t>0\}$ is a $\tau_d$-basis for $X'$. Specifically, for every $x\in X'$ and $t>0$, we have that $x\in B_d(y,s) \subseteq B_d(x,t)$ for every $y\in B_d(x,t/3) \cap Y$ and $t/3 < s < 2t/3$. 
\end{lemma}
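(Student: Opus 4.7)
The plan is to prove the explicit quantitative claim first, and then immediately read off the $\tau$-basis property as a corollary. So fix $x \in X'$ and $t > 0$. Because $\Mm$ is an expansion of a divisible ordered group, the element $t/3$ is defined, and the open ball $B_d(x,t/3)$ is a nonempty $\tau$-neighborhood of $x$. Since $x \in X' \subseteq cl(Y)$, this neighborhood meets $Y$, so the intersection $B_d(x,t/3) \cap Y$ is nonempty and I may pick any $y$ in it. Divisibility also ensures the ordering is dense, so there exists $s$ with $t/3 < s < 2t/3$.

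Next I would verify the two containments $x \in B_d(y,s) \subseteq B_d(x,t)$. The first is immediate from symmetry and the choice of $y$: $d(y,x) = d(x,y) < t/3 < s$. For the second, let $z \in B_d(y,s)$; the triangle inequality gives
\[
d(x,z) \leq d(x,y) + d(y,z) < \tfrac{t}{3} + s < \tfrac{t}{3} + \tfrac{2t}{3} = t,
\]
so $z \in B_d(x,t)$. This establishes the ``specifically'' clause of the lemma.

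Finally, I would deduce the $\tau$-basis conclusion. Given $x \in X'$ and any open set $A \in \tau$ with $x \in A$, by definition of the topology generated by open balls there exists $t > 0$ with $B_d(x,t) \subseteq A$. Applying the construction above furnishes $y \in Y$ and $s > 0$ with $x \in B_d(y,s) \subseteq B_d(x,t) \subseteq A$, which is exactly the $\tau$-basis condition for $X'$.

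There is no real obstacle here, in line with the author's comment that the lemma follows from a direct use of the triangle inequality. The divisibility hypothesis is invoked solely to produce $t/3$ and a point strictly between $t/3$ and $2t/3$; no further model-theoretic input or appeal to definability of $d$ is needed in the argument.
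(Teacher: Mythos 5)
Your proof is correct and is exactly the argument the paper has in mind: the paper omits the proof, noting only that it "follows from an easy application of the triangle inequality," which is precisely what you supply (together with the routine observations that divisibility yields $t/3$ and a point of $Y$ in $B_d(x,t/3)$ exists since $X'\subseteq cl(Y)$).
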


In the proof of Lemma~\ref{lem:sep-implies-2c} below recall that, in every o-minimal expansion of an ordered group, the group is necessarily divisible. 

\begin{lemma}\label{lem:sep-implies-2c}
Let $\Mm$ be an o-minimal expansion of an ordered group and $(X,d)$ be a definable metric space. If $(X,d)$ is definably separable then it is definably second-countable. 
\end{lemma}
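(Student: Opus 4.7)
The plan is to cover $X$ by finitely many \nice{} subsets and invoke Lemma~\ref{lem:nice}. For each $0 \le k \le \dim X$ let $\Aa(k) = \{B_d(x,t) : x \in X,\, t > 0,\, \dim B_d(x,t) \le k\}$ and $Y_k = \cup\Aa(k) \setminus \cup\Aa(k-1)$ (with $\Aa(-1)=\emptyset$); by Fact~\ref{fact:fiber} and uniform finiteness these are definable and partition $X$. A routine use of the triangle inequality shows that for $x \in Y_k$ every sufficiently small ball $B_d(x,s)$ has dimension exactly $k$: the inequality $\le k$ follows from $x \in \cup\Aa(k)$, and $\ge k$ from $x \notin \cup\Aa(k-1)$.

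Next, for each $k$ with $Y_k \ne \emptyset$, I apply Lemma~\ref{lem:sep-dense} to the family $\Aa(k)$ (invoking definable separability) to obtain a definable open set $Z(k) \subseteq \cup\Aa(k)$ with $\dim Z(k) \le k$ and $\cup \Aa(k) \subseteq cl(Z(k))$. Since $Y_k \subseteq cl(Z(k))$, Lemma~\ref{lem:easy-basis} ensures that $\Bb_k = \{B_d(z,s) : z \in Z(k),\, s > 0\}$, indexed by $Z(k) \times M^{>0}$, is a $\tau$-basis for $Y_k$. Fix $x \in Y_k$ and $t > 0$ small enough that $B_d(x,t/3) \in \Aa(k)$; the open set $B_d(x,t/3)$ then sits inside $\cup\Aa(k)$, in which $Z(k)$ is dense, so $Z(k) \cap B_d(x,t/3)$ is dense in $B_d(x,t/3)$. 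The o-minimal identity $\dim D = \dim cl(D)$ then yields $\dim(Z(k) \cap B_d(x,t/3)) = k$; in particular $\dim Z(k) = k$, so the index set of $\Bb_k$ has dimension exactly $k+1$.

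To conclude that $Y_k$ is \nice{} via $\Bb_k$, fix $x \in Y_k$ and a definable neighborhood $A$ of $x$, and choose $t > 0$ with $B_d(x,t) \subseteq A$ and $\dim B_d(x,t/3) = k$. The second clause of Lemma~\ref{lem:easy-basis} guarantees that every $(z,s) \in (Z(k) \cap B_d(x,t/3)) \times (t/3, 2t/3)$ satisfies $x \in B_d(z,s) \subseteq B_d(x,t) \subseteq A$, so the admissible parameters have dimension $k+1 = \dim(Z(k) \times M^{>0})$. By Remark~\ref{rem:nice}, $Y_k$ is \nice{}; hence $X$ is \nice{} by Lemma~\ref{lem:nice}, which is precisely definable second-countability. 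The main technical step is the dimension equality $\dim(Z(k) \cap B_d(x,t/3)) = k$; once that is in hand, the rest is a mechanical unpacking of Lemma~\ref{lem:easy-basis} and Remark~\ref{rem:nice}.
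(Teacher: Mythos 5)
Your overall architecture (stratify $X$ by the local dimension of small balls, apply Lemma~\ref{lem:sep-dense} to each stratum to obtain a low-dimensional $\tau$-dense open set $Z(k)$, and use Lemma~\ref{lem:easy-basis} to turn balls centred on $Z(k)$ into a witnessing $\tau$-basis) is reasonable, but the step you yourself flag as ``the main technical step'' has a genuine gap. You pass from ``$Z(k)\cap B_d(x,t/3)$ is dense in $B_d(x,t/3)$'' to ``$\dim(Z(k)\cap B_d(x,t/3))=k$'' by invoking ``the o-minimal identity $\dim D=\dim cl(D)$''. That identity holds for the \emph{Euclidean} closure; the closure you are taking is the closure in the metric topology $\tau$. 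For a general definable topology a definable $\tau$-dense subset can have strictly smaller dimension than its $\tau$-closure: this is exactly the failure of the \emph{small boundaries} property, and the paper is explicit that small boundaries is known for definable metric spaces over o-minimal expansions of ordered \emph{fields} (\cite[Lemma 9.2.11]{walsberg-thesis}) but is open over ordered groups, which is the hypothesis of this lemma. Without that input you only get $Z(k)\cap B_d(x,t/3)\neq\emptyset$, hence a lower bound of $1$ rather than $\dim Z(k)+1$ on the dimension of the admissible parameter set, and the witnessing condition of Remark~\ref{rem:nice} is not verified. (The paper even remarks, right after its own proof, that the argument ``can be streamlined if one assumes that $(X,\tau)$ has small boundaries''; your proof is essentially that streamlined version, so it does go through over ordered fields, but not in the stated generality.)

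The paper circumvents this by inducting on the dimension of a definable set $Y$ with $X'\subseteq cl(Y)$ rather than on strata of $X$. It lets $Y'$ be the union of all basic open sets of $\tau|_Y$ of dimension at most $n-1$ and splits $X$ into $X_0=X\cap cl(Y\setminus Y')$ and $X_1=X\setminus X_0$. For $x\in X_0$ every ball $B_d(x,t/3)$ meets $Y\setminus Y'$, and then $\dim(Y\cap B_d(x,t/3))=\dim Y$ holds \emph{by the definition of $Y'$} (any open set meeting $Y\setminus Y'$ cuts $Y$ in full dimension), not by a density-preserves-dimension argument; so $\{B_d(y,t):y\in Y,\, t>0\}$ witnesses that $X_0$ is \nice{}. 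The remaining piece $X_1$ lies in $cl(Z)$ for the set $Z\subseteq Y'$ produced by Lemma~\ref{lem:sep-dense}, which has dimension at most $n-1$, and is handled by the induction hypothesis. To salvage your approach you would need a similar device: replace $Z(k)$ by the part of it that is locally of full dimension relative to $\tau|_{Z(k)}$, and push the rest into a lower-dimensional induction.
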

\begin{proof}
Let $(X,d)$ be a definably separable definable metric space. Let $\tau = \tau_d$. Our proof proceeds by showing that every definable subset $X'\subseteq X$ is \nice{} by induction on the dimension of a definable set $Y\subseteq X$ such that $X' \subseteq cl(Y)$. If $\dim Y = 0$ then, by Hausdorffness of definable metric spaces, $Y$ is also closed and so $X' \subseteq Y$ is finite and the result is given by Proposition~\ref{prop:bf}\ref{itm:prop-bf-2}. This covers the base case. Onwards suppose that $n=\dim Y>0$. To ease notation we consider $X'=X$. 

Let $Y'$ denote the union of all (basic) open definable sets of dimension at most $n-1$ in the subspace topology $\tau|_Y$.
Let $X_0= X \cap cl(Y\setminus Y')$ and $X_1=X\setminus X_0$. 
Since $X \subseteq cl(Y)$ we have that $X_1\subseteq cl (Y')$. Observe that, by Lemma~\ref{lem:sep-dense}, there exists a definable set $Z\subseteq Y'$ with $\dim Z \leq n-1$ and such that $Y' \subseteq cl(Z)$. In particular $X_1 \subseteq cl(Z)$. By the induction hypothesis we reach that $X_1$ is \nice{}. We end the proof by showing that $X_0$ is \nice{}, which completes the proof by application of Lemma~\ref{lem:nice}.

Let $\Bb_0=\{ B_d(y,t) : y \in Y,\, t>0\}$. By Lemma~\ref{lem:easy-basis} note that $\Bb_0$ is a $\tau$-basis for $X$ (in particular for $X_0$). We prove that $\Bb_0$ witnesses that $X_0$ is \nice{} by showing that, for every $x\in X_0$ and $t>0$, it holds that 
\[
\dim \{ ( y, s ) \in Y\times M^{>0} : x\in B_d(y,s)\subseteq B_d(x,t)\} = \dim (Y \times M^{>0}). 
\]
Hence let us fix $x\in X_0$ and $t>0$. By Lemma~\ref{lem:easy-basis} we have that 
\[
\{ ( y, s) : y\in Y\cap B_d(x,t/3),\, t/3<s<2t/3\} \subseteq \{ ( y, s ) \in Y\times M^{>0} : x\in B_d(y,s)\subseteq B_d(x,t)\}.
\]
Now notice that, by definition of the set $Y'$, any definable open set $A$ with $A\cap Y\setminus Y' \neq \emptyset$ satisfies that $\dim (A \cap Y) = n = \dim Y$. In particular, since $X_0= X \cap cl(Y\setminus Y')$ and $x\in X_0$, we have that  $\dim(Y\cap B_d(x,t/3))= \dim Y$. Applying Fact~\ref{fact:fiber}, we conclude that $\dim \{ ( y, s) : y\in Y\cap B_d(x,t/3),\, t/3<s<2t/3\} = \dim(Y) + 1 = \dim (Y \times M^{>0})$, as desired. 
\end{proof}

The above proof can be streamlined if one assumes that $(X,\tau)$ has small boundaries. This property was proved for definable metric spaces in o-minimal expansions of ordered fields in~\cite[Lemma 9.2.11]{walsberg-thesis}. Although it is likely that it holds for metric spaces definable in o-minimal expansions of groups too, the author is not aware of any proof of this. 

The Sorgenfrey line (Example~\ref{ex:sorg} defined in $(\mathbb{R},<)$) is a classical example of a (non-metrizable) topological space that is hereditary separable but not second-countable. The Moore plane and the Sorgenfrey plane are examples of (non-metrizable) topological spaces that are separable but not hereditarily separable, and in particular not second-countable. An analogous landscape is present in the o-minimal definable setting. That is, the three aforementioned examples are definable in the ordered field of reals, and furthermore have definable analogues in any o-minimal expansion of an ordered field (see e.g. Example A.12 in~\cite{andujar_thesis}) which display definable topological properties analogous to the ones described for the classical spaces.

\section{Towards a Definable Urysohn Metrization Theorem} \label{sec:conj}

We say that a definable topological space is \emph{affine} if it is definably homeomorphic to a set with the o-minimal Euclidean topology. The characterization of o-minimal affine definable topological spaces has been the subject of ongoing research in~\cite[Chapter 10]{dries98} (manifold and quotient spaces), \cite{walsberg15} (metric spaces), \cite{johnson14} (quotient spaces) and \cite{pet_rosel_18} and \cite{one-dim} (one-dimensional spaces). 

We end this paper by conjecturing that, in o-minimal expansions of ordered fields, definable second-countability characterizes affine definable topological spaces. This conjecture is informed by the contents of Chapter 7 and Example A.16 in~\cite{andujar_thesis}, the latter which describes a Hausdorff and regular definable topological space in $(\mathbb{R},+,\cdot,<)$ that can be partitioned into two definable subsets where the subspace topology is Euclidean, but nevertheless the space is not definably second-countable (in particular not affine). We conjecture that this condition is the only obstacle to achieve affineness. 

\begin{conjecture}[Definable Urysohn Metrization Conjecture] \label{conj:UMT}
Let $(X,\tau)$ be definable topological space in an o-minimal expansion of an ordered field. Then $(X,\tau)$ is definably homeomorphic to a set with the o-minimal Euclidean topology if and only if it is Hausdorff, regular, and definably second-countable. 
\end{conjecture}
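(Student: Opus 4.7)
The forward implication is immediate: a definable subset of $M^n$ with the o-minimal Euclidean topology is Hausdorff, regular, and, by Proposition~\ref{prop:bf}\ref{itm:prop-bf-3}, definably second-countable; all three properties transfer across a definable homeomorphism.

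For the reverse direction, I would model the argument on the classical Urysohn metrization theorem, adapted definably. Fix a definable basis $\Bb = \{U_b : b \in B\}$ witnessing definable second-countability. The first step is a \emph{definable Urysohn lemma}: for any pair of disjoint definable closed sets $C, D \subseteq X$, produce a definable continuous function $f : X \to [0,1]$ with $f|_C = 0$ and $f|_D = 1$. One would run the usual dyadic construction, iteratively interpolating basic opens from $\Bb$ between already-chosen nested pairs using definable regularity, and then define $f$ as the infimum of the corresponding scaled indicators. Definability is automatic since each stage selects parameters in $B$ by a definable formula. A parallel intermediate goal would be to show that Hausdorff, regular, definably second-countable spaces are definably metrizable, which would reduce the conjecture to the metric case, where Walsberg's techniques and Theorem~\ref{thm:sep-metric-spaces} give additional leverage.

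The critical and distinctively o-minimal step is producing a definable embedding $F : X \to M^N$ for some \emph{finite} $N$. In the classical proof one evaluates countably many separating functions to embed into $[0,1]^{\mathbb{N}}$; here one has a definable family $\{f_{b,c} : (b,c) \in B^2_\preceq\}$, where $B^2_\preceq = \{(b,c) \in B^2 : cl(U_b) \subseteq U_c\}$, produced by the definable Urysohn lemma. The plan is to show that a generic $N$-tuple $\mathbf{p} = ((b_1,c_1), \ldots, (b_N,c_N)) \in (B^2_\preceq)^N$ yields a map $F_{\mathbf{p}} = (f_{b_1,c_1}, \ldots, f_{b_N,c_N})$ that is a definable homeomorphism onto its image. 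The dimension-quantified refinement condition in Definition~\ref{dfn:2nd-count} plays here the role that countability plays classically: it ensures that ``useful'' parameters exist in full dimension at every point, so Fiber Lemma arguments (Fact~\ref{fact:fiber}) can extract a generic tuple with the desired separation properties.

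The main obstacle will be twofold. Injectivity of $F_{\mathbf{p}}$ should reduce to showing that the definable set of pairs $(x,y) \in X^2 \setminus \Delta$ not separated by a generic tuple has dimension strictly less than $(B^2_\preceq)^N$, after which Hausdorffness and the definable Urysohn lemma close the argument. The genuinely hard part is continuity of the inverse: one must show that for each $x \in X$ and each $U_b \ni x$, the image $F_{\mathbf{p}}(U_b)$ contains a Euclidean neighborhood of $F_{\mathbf{p}}(x)$. This is where a new idea is likely needed, possibly invoking o-minimal definable triangulation, or exploiting that full-dimensional refinement at $x$ yields enough independent local ``coordinates'' near $F_{\mathbf{p}}(x)$ to recover the topology. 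Even granting all this, controlling the ambient dimension $N$ and showing it can be chosen uniformly (and indeed finitely at all, as opposed to growing with some measure of complexity of $X$) appears to require additional input, perhaps a direct cell-decomposition argument on the graph of the identification map, or a substantially refined version of Proposition~\ref{prop:bf}\ref{itm:prop-bf-manifold} that produces a definable atlas of Euclidean charts whose transition maps glue definably.
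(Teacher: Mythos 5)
The statement you are addressing is Conjecture~\ref{conj:UMT}: the paper offers no proof of it. It is posed as an open problem, supported only by two special cases recorded at the end of Section~\ref{sec:conj}: definable metric spaces (via Walsberg's Theorem 9.0.1 in~\cite{walsberg15} together with Proposition~\ref{prop:2c-implies-sep} and Theorem~\ref{thm:sep-metric-spaces}) and one-dimensional Hausdorff spaces (via Theorem 9.1 of~\cite{one-dim} and Example~\ref{ex:sorg}). So there is nothing in the paper to compare your argument against, and your proposal has to be judged as a research plan for an open problem rather than as a proof. Your forward implication is correct, and your framing of the reverse direction --- reduce to definable metrizability and then invoke Walsberg's theorem --- is exactly the reduction the paper itself suggests when it remarks that it would suffice to show that any Hausdorff, regular, definably second-countable space is definably metrizable.

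The substantive gaps are precisely in the steps you label as routine. The claim that definability of the Urysohn separating function is ``automatic since each stage selects parameters in $B$ by a definable formula'' does not hold up: the classical construction interpolates open sets indexed by \emph{all} dyadic rationals and then sets $f(x)=\inf\{q : x\in U_q\}$, an infinite recursion over a non-definable index set; a countable union or intersection of uniformly definable choices is not definable, and no finite-stage truncation yields a continuous separating function. A definable Urysohn lemma for general definable topological spaces would require a genuinely different construction (for the Euclidean topology one uses the distance function of an ambient definable metric, which is exactly what is unavailable before metrizability is established). The other two obstacles you flag --- obtaining an embedding into $M^N$ for \emph{finite} $N$ rather than into $[0,1]^{\mathbb{N}}$, and continuity of the inverse --- are real and are where the conjecture currently lives; the appeal to Fact~\ref{fact:fiber} to extract a generic separating tuple is plausible in spirit but is not an argument. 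In short, the proposal correctly identifies the shape of a possible attack, and its intermediate reduction agrees with the paper's own commentary, but it establishes nothing beyond the trivial direction of an open conjecture.
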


The classical Urysohn Metrization Theorem states that every Hausdorff regular second-countable topological space is metrizable. 
The main result (Theorem 9.0.1) in~\cite{walsberg-thesis} states that a definable metric space in an o-minimal expansion of an ordered field is affine if and only if it is definably separable. By Proposition~\ref{prop:2c-implies-sep}, it follows that Conjecture~\ref{conj:UMT} has a positive answer among definable metric spaces. Furthermore, in order to prove the conjecture it would suffice to show that any Hausdorff, regular, definably second-countable space $(X,\tau)$ is definably metrizable, hence its name. 

Theorem 9.1 in \cite{one-dim} states that a one-dimensional Hausdorff definable topological space in an o-minimal expansion of an ordered field is affine if and only if it does not contain a subspace definably homeomorphic to an interval with the discrete or Sorgenfrey line topologies. Since these topologies are not definably second-countable (see Example~\ref{ex:sorg} addressing the latter), it follows (applying Proposition~\ref{prop:bf}\ref{itm:prop-bf-1}) that Conjecture~\ref{conj:UMT} also has a positive answer among one-dimensional spaces. 

\bibliographystyle{alpha}
\bibliography{O-min-sep-2c}

\providecommand{\noopsort}[1]{}\newcommand{\SortNoop}[1]{}
\begin{thebibliography}{{\SortNoop{Dries}}vdD98}

\bibitem[AF11]{asch-fisher-11}
Matthias Aschenbrenner and Andreas Fischer.
\newblock Definable versions of theorems by {K}irszbraun and {H}elly.
\newblock {\em Proc. Lond. Math. Soc. (3)}, 102(3):468--502, 2011.

\bibitem[AG21]{andujar_thesis}
Pablo And\'ujar~Guerrero.
\newblock {\em Definable Topological Spaces in O-minimal Structures}.
\newblock PhD thesis, Purdue University, 2021.

\bibitem[AGT23]{one-dim}
Pablo And\'ujar~Guerrero and Margaret E.~M. Thomas.
\newblock One-dimensional definable topological spaces in o-minimal structures, 2023.
\newblock arXiv:2310.04510.

\bibitem[AGTW21]{atw}
Pablo And\'{u}jar~Guerrero, Margaret E.~M. Thomas, and Erik Walsberg.
\newblock Directed sets and topological spaces definable in o-minimal structures.
\newblock {\em J. Lond. Math. Soc. (2)}, 104(3):989--1010, 2021.

\bibitem[BBT23]{gaga23}
Benjamin Bakker, Yohan Brunebarbe, and Jacob Tsimerman.
\newblock o-minimal {GAGA} and a conjecture of {G}riffiths.
\newblock {\em Invent. Math.}, 232(1):163--228, 2023.

\bibitem[DG22]{dolich-good-22}
Alfred Dolich and John Goodrick.
\newblock Tame topology over definable uniform structures.
\newblock {\em Notre Dame J. Form. Log.}, 63(1):51--79, 2022.

\bibitem[{\SortNoop{Dries}}vdD98]{dries98}
Lou {\SortNoop{Dries}}~van~den Dries.
\newblock {\em Tame Topology and O-minimal Structures}, volume 248 of {\em London Mathematical Society Lecture Note Series}.
\newblock Cambridge University Press, Cambridge, 1998.

\bibitem[FZ80]{FZ}
J\"org Flum and Martin Ziegler.
\newblock {\em Topological model theory}, volume 769 of {\em Lecture Notes in Mathematics}.
\newblock Springer, Berlin, 1980.

\bibitem[Joh18]{johnson14}
Will Johnson.
\newblock Interpretable sets in dense o-minimal structures.
\newblock {\em J. Symb. Log.}, 83(4):1477--1500, 2018.

\bibitem[Pil88]{pillay88}
Anand Pillay.
\newblock On groups and fields definable in {$o$}-minimal structures.
\newblock {\em J. Pure Appl. Algebra}, 53(3):239--255, 1988.

\bibitem[PR20]{pet_rosel_18}
Ya'acov Peterzil and Ayala Rosel.
\newblock Definable one-dimensional topologies in o-minimal structures.
\newblock {\em Arch. Math. Logic}, 59(1-2):103--125, 2020.

\bibitem[PS86]{pillay86}
Anand Pillay and Charles Steinhorn.
\newblock Definable sets in ordered structures. {I}.
\newblock {\em Trans. Amer. Math. Soc.}, 295(2):565--592, 1986.

\bibitem[PS01]{pet-star-01}
Ya'acov Peterzil and Sergei Starchenko.
\newblock Expansions of algebraically closed fields in o-minimal structures.
\newblock {\em Selecta Math. (N.S.)}, 7(3):409--445, 2001.

\bibitem[PW06]{pil-wilkie-06}
J.~Pila and A.~J. Wilkie.
\newblock The rational points of a definable set.
\newblock {\em Duke Math. J.}, 133(3):591--616, 2006.

\bibitem[SW19]{sim-wal-19}
Pierre Simon and Erik Walsberg.
\newblock Tame topology over dp-minimal structures.
\newblock {\em Notre Dame J. Form. Log.}, 60(1):61--76, 2019.

\bibitem[Tho12]{thomas12}
Margaret E.~M. Thomas.
\newblock Convergence results for function spaces over o-minimal structures.
\newblock {\em J. Log. Anal.}, 4(1), 2012.

\bibitem[Wal15a]{walsberg-thesis}
Erik Walsberg.
\newblock {\em Metric {G}eometry in a {T}ame {S}etting}.
\newblock ProQuest LLC, Ann Arbor, MI, 2015.
\newblock Thesis (Ph.D.)--University of California, Los Angeles.

\bibitem[Wal15b]{walsberg15}
Erik Walsberg.
\newblock On the topology of metric spaces definable in o-minimal expansions of fields, 2015.
\newblock arXiv:1510.07291.

\end{thebibliography}

\end{document}